\newcommand{\RNum}[1]{\uppercase\expandafter{\romannumeral #1\relax}}
\newtheorem{te}{Theorem}[section]
\newtheorem{de}{Definition}[section]
\newtheorem{lem}{Lemma}[section]
\newtheorem{prop}{Proposition}[section]
\newtheorem{rem}{Remark}[section]
\newcommand\bqa{\begin{eqnarray}}
\newcommand\eqa{\end{eqnarray}}
\def\<{\langle}
\def\>{\rangle}
\newcommand{\CH}{{\mathcal H}}
\newcommand{\CJ}{{\mathcal J}}
\newcommand{\CV}{{\mathcal V}}
\newcommand{\CW}{{\mathcal W}}
\newcommand{\IC}{{\mathbb C}}
\newcommand{\IF}{{\mathbb F}}
\newcommand{\IQ}{{\mathbb Q}}
\newcommand{\IR}{{\mathbb R}}
\newcommand{\IZ}{{\mathbb Z}}
\newcommand{\fX}{{\mathfrak X}}
\numberwithin{equation}{section}
\begin{document}

\title{On explicit computation of Curtis homomorphisms for $GL(n, \IF_q)$}

\author{Xuantong Qu}
\address{School of Mathematical Sciences, University of Nottingham,\\ University Park, Nottingham NG7 2RD, UK}
\email{pmyxq@nottingham.ac.uk}

\begin{abstract}
    In this note we give explicit computations of certain types of Curtis homomorphisms and interpret them in terms of Gelfand-Tsetlin diagrams. Namely, this interpretation follows from  Gelfand-Tsetlin formulas for the $GL(n, \IF_q)$-Whittaker functions associated with principal series representations via exploiting the method in \cite{glo} for $GL(n,\IR)$-Whittaker functions. The explicit computations of Whittaker functions are based on computing the Gauss-Bruhat decomposition for certain elements in $GL(n, \IF_q)$.
\end{abstract}

\maketitle

\section{Introduction}
In this note we are concerned with the Gelfand-Graev representation of $GL(n, \IF_q)$, $n>1$. There are two equivalent ways of describing irreducible representations entering the Gelfand-Graev representations. In 1970, S.I. Gelfand \cite{gel} introduced Bessel functions associated to each irreducible representation entering Gelfand-Graev representation of group $GL(n, \IF_q)$ and gave explicit computations for certain cases of $GL(3, \IF_q)$. Another way exploits properties of the Hecke algebra $\CH$ (or $GL(n, \IF_q)-$endomorphism algebra) of the Gelfand-Graev representation. In 1993, Curtis proved the following factorization theorem (Theorem 4.2 in \cite{c1}) for the homomorphisms between $\overline{\IQ_{l}}$-algebras: 
\begin{equation}
    \begin{tikzcd}
\CH \arrow{r}{f_{T,\theta}} \arrow{dr}{f_T} & \overline{\IQ_{l}}\\
&\overline{\IQ_{l}}[T] \arrow{u}{\widehat{\theta}}
\end{tikzcd}
\end{equation}
where $T$ is a maximal torus in $GL(n, \IF_q)$, $\theta:T\rightarrow \overline{\IQ_{l}}$ is a character of the group $T$ and $\widehat{\theta}$ is the linear extension of $\theta$ to the group algebra: $\overline{\IQ_{l}}[T]$. 

\ 

Since $\overline{\IQ_{l}}$ is isomorphic to $\IC$ as a field, in the following sections we shall see that our computation can be implemented for both $\overline{\IQ_{l}}$ and $\IC$ interchangeably. Therefore throughout this note we shall regard the Hecke algebra $\CH$ and the group algebra of $T$ as algebras over $\IC$.

\ 

We refer the algebra homomorphism $f_T$ in (0.1) as $\textbf{Curtis homomorphism}$. In the factorization theorem, the Curtis homomorphism was constructed using the Green function over finite fields whose general theory was developed by Deligne and Lusztig \cite{dl} via an geometric approach of constructing all irreducible representations of $G$ using $l$-adic cohomology. It is a natural problem of computing $f_T$ explicitly which is directly related to explicit computations of Bessel functions over finite fields (defined for $GL(n, \IF_q)$ by Gelfand in \cite{gel}). Furthermore, it is expected by Curtis \cite{c2} that it is probable to find a combinatorial proof of the existence of $f_T$, which is also closely related to the explicit computations of structure constants of the Hecke algebra $\CH$. Following his own construction, Curtis computed $f_T$ explicitly when $G=SL_2$ in \cite{c1} and later with Shinoda \cite{cs} they gave explicit computation of $f_T$ on maximal parabolics for $G=GL_n$. We note that both explicit computations in \cite{c1} and \cite{cs} exploited properties of Green functions over finite fields and they are hard to generalize to other cases.

\ 

In this note we propose a new approach to the problem following the ideas of \cite{glo} for Archimedean Whittaker functions.  Namely we derive the Gelfand-Tsetlin formulas for $GL(n, \IF_q)$-Whittaker functions in Section 3 and 4 which share the same formalism and combinatorial interpretations in terms of Gelfand-Tsetlin diagrams with the $GL(n, \IR)$-Whittaker functions. As a consequence, we are able to give explicit computation of values of Curtis homomomorphisms for $G_n=GL(n, \IF_q)$ and $T$ being its maximal split tori (we shall fix $T=T_n$ to be the subgroup of diagonal matrices) for the case of maximal and minimal parabolics (defined in Section 1). The main results of this note are stated in Theorems 2.1-2.3 of Section 2, which illustrate that the Curtis homomophisms are indeed "sums" over the corresponding Gelfand-Tsetlin diagrams.

\

We first note that by work of Gelfand \cite{gel}, the homomorphisms $f_{T,\theta}$ are in one-to-one correspondence with the Bessel functions of the generic representations of $G_n$. We also note that for every character $\underline{\chi}_{n}$ of $T_n$, ($T_n$, $\underline{\chi}_{n}$) are in one-to-one correspondence with the principal series representations of $G_n$. Therefore the problem reduces to the explicit computation of Bessel functions corresponding to principal series representations.

\ 

In Section 1 we identify the Bessel functions (up to multiple of constants) with matrix coefficients of the corresponding principal series representations. We call these matrix coefficients $GL(n, \IF_q)$-Whittaker functions due to similarity of constructions for $GL_n$ over Archimedean local
fields \cite{glo}. Namely, a recursive formula for $GL(n, \IR)$-Whittaker functions with respect to $n$ was derived and a combinatorial interpretation of this recursion in terms of Gelfand-Tsetlin diagrams was explained. We propose (in Section 3 and 4) that the same result (in terms of formalism and combinatorial interpretation) in \cite{glo} also holds for their finite field counterparts defined in Section 1. Thus, we refer to as Gelfand-Tsetlin formulas in this note. Explicit formulas of Curtis homomophisms would easily follow once the Whittaker (Bessel) functions were explicitly computed by the Gelfand-Tsetlin formulas. 

\ 

We also remark that these two types of values of Whittaker functions for the maximal and minimal parabolics were considered as Kloosterman sums previously. In particular, the values of Whittaker functions on anti-diagonals were called long element Kloosterman sums in \cite{gb}, or big cell Kloosterman sums in \cite{bfg,fr}.  $GL_3$-long element Kloosterman sums was computed in \cite{bfg}. An algorithm of computing $GL_n$-long element Kloosterman sums was described in \cite{fr}, however, in general it is hard to have explicit formulas for arbitrary $n$ following this algorithm. Combinatorial interpretation of Kloosterman sums in terms of Gelfand-Tsetlin diagrams as well as recursion over the ranks were not considered previously as far as the author's knowledge.

\ 

The plan of this note is as follows. We first review the notions of Bessel functions following the work of Gelfand \cite{gel} and identify them with Whittaker functions defined over finite fields. The main results in this note are explicit formulas of Curtis homomorphisms (Theorems 2.1-2.3) and the Gelfand-Tsetlin formulas of $GL(n, \IF_q)$-Whittaker functions(Theorem 3.1, 3.2 and 4.1). The values of Curtis homomorphisms are products of recursion kernels $\widetilde{Q}$, therefore they are "sums" over the corresponding Gelfand-Tsetlin diagrams. In Section 3 and 4 we give Gelfand-Tsetlin formulas for the values of Whittaker functions on maximal and minimal parabolics respectively. The formulas are expressed recursively over the ranks,  namely in both cases a $GL_n$-Whittaker function could be expressed in terms of $GL_{n-1}$-Whittaker functions and the recursion kernel is summed over a Gelfand-Tsetlin diagram. Therefore by resolving the recursive relations obtained we have a explicit formula of values of Whittaker functions and its combinatorial interpretation, as explained in Theorem 3.1, 3.2 and 4.1. In the last section we prove the proposition serving as key step while we derive recursions of anti-diagonal matrices, which has the same forms as its Archimedean counterparts (see Part 2 of \cite{glo}). We show that this is not merely a coincidence, and verify the appropriate expressions using Cauchy-Binet formula, which works for arbitrary fields.

\renewcommand{\abstractname}{Acknowledgements}
\begin{abstract}
  The author is grateful to his PhD supervisor S. Oblezin for fruitful discussions and constructive comments during the preparation of this paper and all the supports for the author's PhD study. The PhD project of the author is supported by University of Nottingham/UKRI International Scholarship. The UKRI scholarship is part of the EPSRC DTP grant EP/T517902/1-2603416.
\end{abstract}

\section{Preliminaries: Whittaker models, Bessel functions and Whittaker functions}
Let $\IF_{q}$ be a finite field with $q$ elements, where $q$ is a power of an odd prime, and $\psi:\IF_{q}\rightarrow \IC^{\times}$ be a nontrivial additive character. Let $U_{n}$ be the upper-triangular unipotent subgroup of general linear group $G_n=GL(n,\IF_{q})$, therefore $\psi$ defines a character on $U_{n}$ by
$$
\psi\begin{pmatrix}
1 & a_1 &* &\cdots &* \\
  &  1 &a_{2} &\ddots &\vdots \\
&  & \ddots & \ddots &*\\
  &   & &1 & a_{n-1}\\
  & & & & 1
\end{pmatrix} = \psi\left(\sum_{i=1}^{n-1}a_i\right).
$$

Consider the Gelfand-Graev representation 
$$
Ind_{U_n}^{G_n}\psi=\{f:G_n \rightarrow \IC \mid f(ug)=\psi(u)f(g) \quad \forall u\in U_n, \forall g\in G_n\},
$$
where $Ind_{U_n}^{G_n}\psi$ is a subrepresentation of the right regular representation of $G_n$. Steinberg \cite{St} showed that $Ind_{U_n}^{G_n}\psi$ is multiplicity-free, i.e., every irreducible subrepresentations of $Ind_{U_n}^{G_n}\psi$ appear exactly once. Following \cite{gel}, we call an irreducible representation $\pi$ of $G_n$ generic if it is isomorphic to a subrepresentation of $Ind_{U_n}^{G_n}\psi$, and we denote the unique subspace of $Ind_{U_n}^{G_n}\psi$ which is isomorphic to $\pi$ by $\CW(\pi,\psi)$. We call $\CW(\pi,\psi)$ the $\textbf{($\psi$-)Whittaker model}$ of $\pi$, with respect to character $\psi$.

There are two ways of descibing irreducible representations entering Gelfand-Graev represenation. Gelfand \cite{gel} showed that for any irreducible generic representation $\pi$ of $G_n$, there exists a unique element of $\CW(\pi,\psi)$ satisfying the following properties, which is called the $\textbf{Bessel function}$ of $\pi$. 

\begin{de}
For any irreducible generic representation $\pi$ of $G_n$, there exists an unique element $J_{\pi}\in \CW(\pi,\psi)$ such that:
\begin{itemize}
    \item [(1)] $J_{\pi}(I_n)=1$, where $I_n$ is the identity matrix;
    \item[(2)]$J_{\pi}(u_1gu_2)=\psi(u_1u_2)J_{\pi}(g)$ for every $u_1, u_2 \in U_n$, $g\in G_n$. (This property is also called $U_n$-bi-equivariance.)
\end{itemize}
\end{de}

\ 

Let $T_n$ be the subgroup of diagonal matrices and $W_n$ is the subgroup of permutation matrices (which is isomorphic to the Weyl group of $G_n$), we have the Bruhat decomposition of $G_n$ following \cite{gel}
\begin{align}
    G_n=U_nT_nW_nU_n.
\end{align}
Let $B_{n}^{-}$ be a Borel subgroup of $G_n=GL(n,\IF_{q})$ consisting of all lower-triangular matrices, therefore (1.1) is equivalent to the following:
\begin{align}
    G_n=\bigsqcup\limits_{w\in W_n}B_{n}^{-}wU_n.
\end{align}

From (1.1) the ($U_n,U_n$)-double cosets in $G_n$ are parametrized by $N_n=T_nW_n$ (which is the subgroup consisting of all monomial matrices). Therefore by the properties of Bessel functions stated above, we have the following theorem for the support of Bessel function $J_{\pi}$:
\begin{te}(\cite{gel}, Proposition 4.9)
$J_{\pi}$ is supported on double cosets of the form
$$
U_n g_{n_1,\cdots, n_s}(c_1,\cdots,c_s) U_n,
$$
where for $n_1,\cdots,n_s>0$ with $n_1+\cdots+n_s=n$ and $c_1,\cdots,c_s\in \IF_{q}^{\times}$,
$$
g_{n_1,\cdots, n_s}(c_1,\cdots,c_s)=\begin{pmatrix}
   0&\cdots &0 &c_1I_{n_1} \\
    \vdots &\iddots &c_2I_{n_2} &0 \\
   0& \iddots &\iddots  &\vdots\\
  c_sI_{n_s}   &0 &\cdots & 0

\end{pmatrix},
$$

where $I_n$ denotes the identity $n\times n$ matrix.
\end{te}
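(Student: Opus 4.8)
\emph{Proof proposal.} The plan is to combine the defining bi-equivariance of the Bessel function (property (2) of Definition 1.1) with the parametrization of the $(U_n,U_n)$-double cosets by the monomial matrices $N_n=T_nW_n$ recorded after (1.1). Each $g\in G_n$ lies in a unique double coset $U_n n U_n$ with $n=tw$, $t=\diag(t_1,\dots,t_n)\in T_n$ and $w\in W_n$; since $\psi$ is nowhere zero on $U_n$ it is enough to show $J_\pi(n)=0$ whenever $n$ is not of the stated block anti-diagonal shape. Throughout let $\sigma\in S_n$ be the permutation with $w e_j=e_{\sigma(j)}$.

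First I would isolate a vanishing criterion: if $u_1,u_2\in U_n$ satisfy $u_1 n u_2=n$, then $J_\pi(n)=J_\pi(u_1 n u_2)=\psi(u_1)\psi(u_2)J_\pi(n)$, so $J_\pi(n)\neq 0$ forces $\psi(u_1)\psi(u_2)=1$. Apply this with $u_2=I_n+xE_{i,i+1}$ a superdiagonal elementary unipotent ($x\in\IF_q$, $E_{i,i+1}$ the matrix unit) and $u_1=n u_2^{-1}n^{-1}$. A direct computation gives $n u_2^{-1}n^{-1}=I_n-x\,t_{\sigma(i)}t_{\sigma(i+1)}^{-1}E_{\sigma(i),\sigma(i+1)}$, which lies in $U_n$ exactly when $x=0$ or $\sigma(i)<\sigma(i+1)$. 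Assuming $\sigma(i)<\sigma(i+1)$, the relation $\psi(u_1)\psi(u_2)=1$ must hold for every $x$; here $\psi(u_2)=\psi(x)$, while $\psi(u_1)=\psi(-x\,t_{\sigma(i)}t_{\sigma(i+1)}^{-1})$ if $\sigma(i+1)=\sigma(i)+1$ and $\psi(u_1)=1$ otherwise, since $\psi$ only detects superdiagonal entries. As $\psi$ is a nontrivial additive character of $\IF_q$, this excludes $\sigma(i+1)>\sigma(i)+1$ and, in the remaining case, forces $t_{\sigma(i)}=t_{\sigma(i+1)}$. Hence $J_\pi(n)\neq 0$ implies that for every $i\in\{1,\dots,n-1\}$ either $\sigma(i)>\sigma(i+1)$, or $\sigma(i+1)=\sigma(i)+1$ together with $t_{\sigma(i)}=t_{\sigma(i+1)}$.

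Next I would read off the block structure. Split $\{1,\dots,n\}$ into the maximal intervals of positions on which $\sigma$ has no descent; on each such interval every step of $\sigma$ is an ascent by $+1$, so $\sigma$ assumes a block of consecutive values, and between consecutive intervals $\sigma$ descends. These value-blocks are pairwise disjoint integer intervals separated by descents, hence occur in strictly decreasing order, so the data is a composition $n=n_1+\dots+n_s$ with $\sigma$ carrying the $k$-th position-interval order-preservingly onto the $k$-th value-interval. The surviving constraints $t_{\sigma(i)}=t_{\sigma(i+1)}$ say exactly that $t$ is constant, with value $c_k\in\IF_q^{\times}$, on the $k$-th value-interval. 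Reading $n=tw$ column block by column block, one sees that the entries of $n$ form scalar blocks $c_kI_{n_k}$ sitting in anti-diagonal position, i.e. $n=g_{n_1,\dots,n_s}(c_1,\dots,c_s)$ after relabelling the blocks so as to match the statement. This gives the desired inclusion of the support of $J_\pi$ in the stated union of double cosets.

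The step I expect to be the main obstacle is the conjugation computation $n u_2^{-1}n^{-1}$ together with its case analysis: one must keep careful track of the fact that $\psi$ is insensitive to every matrix entry except those immediately above the diagonal, so that only $\sigma(i+1)=\sigma(i)+1$ produces a torus condition while $\sigma(i+1)>\sigma(i)+1$ is impossible outright. A secondary nuisance is the bookkeeping that turns the combinatorial condition on $\sigma$ (increasing runs of consecutive integers) plus constancy of $t$ along each run into the explicit matrix $g_{n_1,\dots,n_s}(c_1,\dots,c_s)$; this must be done with care about conventions — left versus right translates, $w$ versus $w^{-1}$ — since the opposite choice would constrain $t$ along the wrong family of blocks.
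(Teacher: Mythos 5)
The paper does not prove this statement: it records it as Theorem~1.1 with a citation to Gelfand (\cite{gel}, Proposition~4.9) and moves on, so there is no in-paper proof to compare against. Your blind proof is correct and complete, and it follows the standard route one would expect Gelfand's argument to take. In more detail: the parametrization of $(U_n,U_n)$-double cosets by monomial matrices $n=tw$ is the one recorded after (1.1); the stabilizer computation $n u_2^{-1} n^{-1}=I_n - x\,t_{\sigma(i)}t_{\sigma(i+1)}^{-1}E_{\sigma(i),\sigma(i+1)}$ is right (using $wE_{i,i+1}w^{-1}=E_{\sigma(i),\sigma(i+1)}$ and then conjugating by $t$); the case split on whether $\sigma(i+1)=\sigma(i)+1$ or $\sigma(i+1)>\sigma(i)+1$ correctly exploits that $\psi$ only sees the superdiagonal, so the second case contradicts nontriviality of $\psi$ while the first forces $t_{\sigma(i)}=t_{\sigma(i+1)}$; and the combinatorial bookkeeping (maximal ascent-runs of $\sigma$ are $+1$-runs, their images are disjoint integer intervals, the descent between runs together with disjointness forces the value-intervals to appear in strictly decreasing order, and $t$ is constant on each) correctly yields the anti-diagonal block-scalar form. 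Two small points worth making explicit in a polished write-up: (i) the step from ``consecutive value-intervals overlap in order'' to ``globally decreasing'' uses disjointness plus the descent at each boundary, which you gesture at but should spell out; (ii) as you note, the composition $(n_1,\dots,n_s)$ and scalars $(c_1,\dots,c_s)$ in the theorem's convention are read off in reverse order relative to the position-intervals, since $c_1I_{n_1}$ occupies the top-right block. Neither is a gap; the argument stands.
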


Therefore by the property (2) of the Bessel function $J_{\pi}$ mentioned above we may concentrate on the values of $J_{\pi}$ on $g_{n_1,\cdots, n_s}(c_1,\cdots,c_s)$.

\ 

In this note we concentrate on two cases of the support of the Bessel functions, namely matrices of the form $g_{1,n-1}(c_1,1)$ and $g_{n-1,1}(c_1,1)$; and matrices of the form $g_{1,\cdots,1}(c_1,\cdots,c_n)$, i.e., anti-diagonal matrices. Since each partition of $n=n_1+\cdots+n_s$ determines a parabolic subgroup of $G_n$, therefore we refer to the cases of $g_{1,n-1}(c_1,1)$ and $g_{n-1,1}(c_1,1)$ as $\textbf{maximal parabolics}$; and the cases of $g_{1,\cdots,1}(c_1,\cdots,c_n)$ as $\textbf{minimal parabolics}$.

\ 

We first recall the notion of principal series representations of general linear groups over finite fields. Let $\underline{\chi}_n$ be a character of the Borel subgroup $B_{n}^{-}$ defined by 
$$
\underline{\chi}_n\begin{pmatrix}
 a_1 &0 &\cdots &0 \\
    * &a_{2} &\ddots &\vdots \\
  \vdots & \ddots & \ddots &0\\
    * &\cdots &* & a_{n}\\

\end{pmatrix} = \prod_{i=1}^{n}\chi_i(a_i),
$$

where $\chi_i$, $1\leq i\leq n$, are multiplicative characters of $\IF_{q}$. A principal series representation ($\pi_{\underline{\chi}_n}, \CV_{\underline{\chi}_n})$ of $G_n$ with parameter $\underline{\chi}_n=(\chi_1,\cdots,\chi_n)$ is given by
$$
\CV_{\underline{\chi}_n}=Ind_{B_{n}^{-}}^{G_n}\underline{\chi}_n=\{f:G_n \rightarrow \IC \mid f(bg)=\underline{\chi}_n(b)f(g) \quad \forall b\in B_{n}^{-}, \forall g\in G_n\}.
$$

\ 

From (1.2) we see the ($B_{n}^{-},B_{n}^{-}$)-double cosets in $G_n$ are parametrized by $W_n$.

For $f_1, f_2\in \CV_{\underline{\chi}_n}$, we define the following inner product on $\CV_{\underline{\chi}_n}$:
\begin{align}
    <f_1, f_2>=\frac{1}{|G_n|}\sum_{g\in G_n}f_1(g)\overline{f_2(g)}.
\end{align}

By (1.1), we may rewrite (1.3) as follows:
\begin{align}
    <f_1, f_2>=\frac{1}{[G_n:B_{n}^{-}]}\sum_{u\in U_n}\sum_{w\in W_n}f_1(wu)\overline{f_2(wu)}.
\end{align}

\ 

Consider $\psi_{n}\in \CV_{\underline{\chi}_n}$ defined by the following:
\begin{align}
    \psi_{n}(wu)=\begin{cases} \psi(u), \quad \quad w=I_n\\ 0, \quad \quad \text{otherwise}\end{cases}.
\end{align}
$\psi_{n}$ is called the ($\psi$-)$\textbf{Whittaker vector}$ of $\CV_{\underline{\chi}_n}$. We have the following observation on $\psi_{n}$:
\begin{lem}
For every $u' \in U_n$, we have $\pi_{\underline{\chi}_n}(u')\psi_n=\psi(u')\psi_n$.
\end{lem}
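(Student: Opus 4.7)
The plan is to unwind the definitions of $\pi_{\underline{\chi}_n}$, which acts on $\CV_{\underline{\chi}_n}$ by right translation $(\pi_{\underline{\chi}_n}(u')f)(g) = f(gu')$, and of $\psi_n$ as given by (1.5), and then verify the desired identity pointwise. Concretely, I would evaluate $(\pi_{\underline{\chi}_n}(u')\psi_n)(g) = \psi_n(gu')$ and compare it with $\psi(u')\psi_n(g)$ for an arbitrary $g \in G_n$.

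To reduce the check to a small number of cases, I would invoke the Bruhat decomposition (1.2), writing $g = bwu$ with $b \in B_n^-$, $w \in W_n$, $u \in U_n$. Since $\psi_n \in \CV_{\underline{\chi}_n}$ satisfies $\psi_n(bg') = \underline{\chi}_n(b)\psi_n(g')$, the computation telescopes to
$$\psi_n(gu') = \psi_n\bigl(bw(uu')\bigr) = \underline{\chi}_n(b)\,\psi_n\bigl(w\cdot uu'\bigr),$$
where I used that $uu' \in U_n$. Thus the lemma reduces to comparing $\psi_n(w\cdot uu')$ with $\psi(u')\psi_n(wu)$.

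I would then split on whether $w = I_n$. For $w \ne I_n$, definition (1.5) gives both $\psi_n(wu) = 0$ and $\psi_n(w\cdot uu') = 0$, so the identity is trivially verified. For $w = I_n$, formula (1.5) and the multiplicativity of $\psi$ on $U_n$ (which follows because the superdiagonal entries of a product of upper-triangular unipotent matrices add, so the defining formula for $\psi$ is additive in its input) yield $\psi_n(uu') = \psi(uu') = \psi(u)\psi(u')$. Reassembling, $\psi_n(gu') = \underline{\chi}_n(b)\psi(u)\psi(u') = \psi(u')\psi_n(g)$, which is the claimed equality.

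I do not anticipate any substantive obstacle. The only point that warrants a brief sanity check is the well-definedness of $\psi_n$ on all of $G_n$ when its values on $W_nU_n$ are extended by left $B_n^-$-equivariance: for $w = I_n$ the factorization $g = bu$ is unique because $B_n^- \cap U_n = \{I_n\}$, while for $w \ne I_n$ the zero value on the whole double coset $B_n^- w U_n$ makes any ambiguity in the decomposition harmless. Once this is noted, the proof is a one-line computation using the Bruhat decomposition and the character property of $\psi$.
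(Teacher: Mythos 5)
Your proof follows essentially the same route as the paper's: both evaluate $\psi_n(gu')$ along the Bruhat decomposition, split on whether the Weyl component $w$ is trivial, use the multiplicativity of $\psi$ on $U_n$ when $w = I_n$, and observe that both sides vanish when $w \neq I_n$. You spell out the $B_n^-$-equivariance factor $\underline{\chi}_n(b)$ explicitly where the paper simply notes that $\psi_n$ is determined by its values on $W_nU_n$, but this is the same observation dressed slightly differently.
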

\begin{proof}
For any given $u' \in U_n$, we have $\pi_{\underline{\chi}_n}(u')\psi_n(wu)=\psi_n(wuu')$, where $w\in W_n$ and $u\in U_n$ are fixed. If $w=I_n$, then clearly we have $\pi_{\underline{\chi}_n}(u')\psi_n(u)=\psi_n(uu')=\psi_n(u')\psi_n(u)$. If $w\neq I_n$, then $\psi_n(wuu')=0$ and $\psi_n(wu)=0$, therefore we still have $\pi_{\underline{\chi}_n}(u')\psi_n(wu)=\psi(u')\psi_n(wu)$ as both sides are 0.

Note that $\psi_n$ is determined by its values on $wu\in G_n$, therefore we conclude that $\pi_{\underline{\chi}_n}(u')\psi_n=\psi(u')\psi_n$.
\end{proof}
\

It is a standard result that ($\pi_{\underline{\chi}_n}, \CV_{\underline{\chi}_n})$ is an irreducible generic representation if $\chi_i$ ($1\leq i\leq n$) are all distinct. Kilmoyer \cite{kil} showed that $\CV_{\underline{\chi}_n}$ and $Ind_{U_n}^{G_n}\psi$ have exactly one irreducible constituent in common, denoted by ($\pi_{\underline{\chi}_n}^{S}, \CV_{\underline{\chi}_n}^{S}$) which is called generalized Steinberg representation in \cite{kil}. Moreover, $\psi_n \in\CV_{\underline{\chi}_n}^{S}$. We could identify $\CV_{\underline{\chi}_n}^{S}$ with its Whittaker model $\CW(\pi_{\underline{\chi}_n}^{S}, \psi)$ via certain matrix coefficients of the representation ($\pi_{\underline{\chi}_n}^{S}, \CV_{\underline{\chi}_n}^{S}$):
\begin{de}
For any $f\in \CV_{\underline{\chi}_n}$, we have the following matrix coefficient of ($\pi_{\underline{\chi}_n}, \CV_{\underline{\chi}_n})$
\begin{align}
    W_{\psi}(f): G_n\rightarrow \IC, \quad  g\mapsto <\pi_{\underline{\chi}_n}(g)f,\psi_n>.
\end{align}
Moreover, by (1.3)-(1.5) we have
\begin{align}
    W_{\psi}(f)(g)=\frac{1}{[G_n:B_{n}^{-}]}\sum_{u\in U_n}(\pi_{\underline{\chi}_n}(g)f)(u)\overline{\psi_n(u)}.
\end{align}
In particular, we denote $W_{\psi}(\psi_n)$ by $\Psi_{\underline{\chi}_n}^{GL_n}$, which is called the $\textbf{Whittaker function}$ associated with principal series representation ($\pi_{\underline{\chi}_n}, \CV_{\underline{\chi}_n})$. 
\end{de}
Note that $W_{\psi}(f)$ is a matrix coefficient of ($\pi_{\underline{\chi}_n}^{S}, \CV_{\underline{\chi}_n}^{S}$) if $f\in \CV_{\underline{\chi}_n}^{S}$, in particular, $W_{\psi}(\psi_n)$ is a matrix coefficient of both ($\pi_{\underline{\chi}_n}, \CV_{\underline{\chi}_n})$ and ($\pi_{\underline{\chi}_n}^{S}, \CV_{\underline{\chi}_n}^{S})$. We have the following explicit identification of the generalized Steinberg representation with its Whittaker model:
\begin{prop}
We have the following identification of ($\pi_{\underline{\chi}_n}^{S}, \CV_{\underline{\chi}_n}^{S})$ with its Whittaker model $\CW(\pi_{\underline{\chi}_n}^{S}, \psi)$:
$$
W_{\psi}: \CV_{\underline{\chi}_n}^{S}\rightarrow \CW(\pi_{\underline{\chi}_n}^{S}, \psi), \quad\quad f\mapsto W_{\psi}(f),
$$
where for any $g\in G_n$, 
$$
W_{\psi}(f)(g)=<\pi_{\underline{\chi}_n}(g)f,\psi_n>=\frac{1}{[G_n:B_{n}^{-}]}\sum_{u\in U_n}(\pi_{\underline{\chi}_n}(g)f)(u)\overline{\psi_n(u)}.
$$
\end{prop}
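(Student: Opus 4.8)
The plan is to show that the linear map $W_\psi$ restricted to $\CV_{\underline{\chi}_n}^{S}$ is a nonzero $G_n$-equivariant map into $Ind_{U_n}^{G_n}\psi$, and then invoke the uniqueness of the Whittaker model together with irreducibility of $\pi_{\underline{\chi}_n}^{S}$ to conclude it is an isomorphism onto $\CW(\pi_{\underline{\chi}_n}^{S},\psi)$. First I would check that for any $f\in\CV_{\underline{\chi}_n}$ the function $W_\psi(f)$ indeed lies in $Ind_{U_n}^{G_n}\psi$, i.e. that $W_\psi(f)(u'g)=\psi(u')W_\psi(f)(g)$ for all $u'\in U_n$. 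This is immediate from Lemma 1.2: writing $W_\psi(f)(u'g)=\langle\pi_{\underline{\chi}_n}(u'g)f,\psi_n\rangle=\langle\pi_{\underline{\chi}_n}(g)f,\pi_{\underline{\chi}_n}(u')^{*}\psi_n\rangle$ and using that $\psi_n$ is (up to the unitary structure) an eigenvector for the $U_n$-action with eigencharacter $\psi$, one gets $\pi_{\underline{\chi}_n}(u')^{*}\psi_n=\overline{\psi(u')}\,\psi_n=\psi(u')^{-1}\psi_n$ since $\psi$ is unitary, hence $W_\psi(f)(u'g)=\psi(u')W_\psi(f)(g)$. Second, $W_\psi$ intertwines the $G_n$-action with right translation: $W_\psi(\pi_{\underline{\chi}_n}(h)f)(g)=\langle\pi_{\underline{\chi}_n}(gh)f,\psi_n\rangle=W_\psi(f)(gh)$, which is the defining property of a morphism into the right regular representation. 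So $W_\psi$ maps $\CV_{\underline{\chi}_n}^{S}$ $G_n$-equivariantly into $Ind_{U_n}^{G_n}\psi$.

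Next I would argue $W_\psi|_{\CV_{\underline{\chi}_n}^{S}}$ is nonzero. Since $\psi_n\in\CV_{\underline{\chi}_n}^{S}$ and the inner product $\langle\cdot,\cdot\rangle$ is nondegenerate (it is the standard Hermitian form on functions on $G_n$, positive definite), we have $W_\psi(\psi_n)(I_n)=\langle\psi_n,\psi_n\rangle\neq 0$; more precisely $\langle\psi_n,\psi_n\rangle=\frac{1}{[G_n:B_n^{-}]}\sum_{u\in U_n}|\psi(u)|^2=\frac{|U_n|}{[G_n:B_n^{-}]}\neq 0$. Hence $\Psi_{\underline{\chi}_n}^{GL_n}=W_\psi(\psi_n)$ is a nonzero element, and in particular $W_\psi$ is not the zero map on $\CV_{\underline{\chi}_n}^{S}$. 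Since $(\pi_{\underline{\chi}_n}^{S},\CV_{\underline{\chi}_n}^{S})$ is irreducible, the kernel of $W_\psi|_{\CV_{\underline{\chi}_n}^{S}}$ is a proper submodule, hence zero, so $W_\psi|_{\CV_{\underline{\chi}_n}^{S}}$ is injective; its image is therefore an irreducible submodule of $Ind_{U_n}^{G_n}\psi$ isomorphic to $\pi_{\underline{\chi}_n}^{S}$. By Steinberg's multiplicity-one theorem there is a unique such submodule, namely $\CW(\pi_{\underline{\chi}_n}^{S},\psi)$ by definition, so the image is exactly $\CW(\pi_{\underline{\chi}_n}^{S},\psi)$ and $W_\psi$ is the asserted identification. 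The stated integral formula for $W_\psi(f)(g)$ is then just the rewriting (1.3)--(1.5) of the inner product already recorded in Definition 1.4.

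The main obstacle — and the only place where care is genuinely needed — is the adjoint computation $\pi_{\underline{\chi}_n}(u')^{*}\psi_n=\psi(u')^{-1}\psi_n$. One must be precise about whether $\pi_{\underline{\chi}_n}$ is unitary with respect to $\langle\cdot,\cdot\rangle$ (it is: right translation preserves the Haar-type sum after the $B_n^{-}$-reduction, provided $|\chi_i|=1$, which holds since the $\chi_i$ are characters of a finite group and hence take values in roots of unity), so that $\pi_{\underline{\chi}_n}(u')^{*}=\pi_{\underline{\chi}_n}(u')^{-1}=\pi_{\underline{\chi}_n}(u'^{-1})$, and then Lemma 1.2 gives $\pi_{\underline{\chi}_n}(u'^{-1})\psi_n=\psi(u'^{-1})\psi_n=\overline{\psi(u')}\psi_n$. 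Substituting back yields the $U_n$-equivariance of $W_\psi(f)$ with the correct (non-conjugated) character $\psi$, matching the convention in the definition of $Ind_{U_n}^{G_n}\psi$. Everything else is formal: equivariance under right translation, nonvanishing at $I_n$, and the standard irreducibility-plus-multiplicity-one packaging.
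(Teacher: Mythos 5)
Your proof is correct and follows essentially the same route as the paper: check $U_n$-equivariance of $W_\psi(f)$ via the adjoint action on $\psi_n$ and Lemma 1.1, check $G_n$-equivariance, observe nonvanishing at $I_n$ via the same computation $W_\psi(\psi_n)(I_n)=|U_n|/[G_n:B_n^-]$, and conclude. The only (welcome) refinement over the paper is that you unpack the final step: the paper invokes ``Schur's lemma'' directly even though the target $Ind_{U_n}^{G_n}\psi$ is not irreducible, whereas you correctly spell out that injectivity follows from irreducibility of $\CV_{\underline{\chi}_n}^{S}$, and that the image must equal $\CW(\pi_{\underline{\chi}_n}^{S},\psi)$ by Steinberg's multiplicity-one theorem.
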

\begin{proof}
For any given $u\in U_n$ and $g\in G_n$, we have
$$
W_{\psi}(f)(ug)=<\pi_{\underline{\chi}_n}(ug)f,\psi_n>=<\pi_{\underline{\chi}_n}(g)f,\pi_{\underline{\chi}_n}(u^{-1})\psi_n>.
$$
By Lemma 1.1 we have
$$
\pi_{\underline{\chi}_n}(u^{-1})\psi_n=\psi_n(u^{-1})\psi=\overline{\psi(u)}\psi.
$$
Therefore 
$$
W_{\psi}(f)(ug)=<\pi_{\underline{\chi}_n}(g)f,\overline{\psi(u)}\psi_n>=\psi(u)<\pi_{\underline{\chi}_n}(g)f,\psi_n>.
$$
Hence we deduce that
$$
W_{\psi}(f)(ug)=\psi(u)W_{\psi}(f)(ug),
$$
whence for all $f\in \CV_{\underline{\chi}_n}^{S}$, $W_{\psi}(f)\in Ind_{U_n}^{G_n}\psi$
It is clear that $W_{\psi}$ is a linear homomorphism between the underlying spaces. For any given $g, g'\in G_n$ we have
\begin{equation*}
    \begin{aligned}
    W_{\psi}(\pi_{\underline{\chi}_n}(g')f)(g)&=\frac{1}{[G_n:B_{n}^{-}]}\sum_{u\in U_n}\left(\pi_{\underline{\chi}_n}(g)(\underline{\pi}_n(g')f)\right)(u)\overline{\psi_n(u)} \\
    &= \frac{1}{[G_n:B_{n}^{-}]}\sum_{u\in U_n}\left(\pi_{\underline{\chi}_n}(gg')f)\right)(u)\overline{\psi_n(u)} \\
    &= <\pi_{\underline{\chi}_n}(gg')f,\psi_n> \\
    &=g' \cdot W_{\psi}(f)(g).
    \end{aligned}
\end{equation*}
Hence we deduce that $W_{\psi}$ embeds ($\pi_{\underline{\chi}_n}^{S}, \CV_{\underline{\chi}_n}^{S})$ into the Gelfand-Graev representation. 
Remark that 
\begin{align}
    W_{\psi}(\psi_n)(I_n)=\frac{1}{[G_n:B_{n}^{-}]}\sum_{u\in U_n}\psi_n(u)\overline{\psi_n(u)}=\frac{|U_n|}{[G_n:B_{n}^{-}]}.
\end{align}
Therefore $W_{\psi}\neq 0$. Hence by Schur's lemma we conclude that $W_{\psi}$ is an isomorphism of representations.
\end{proof}

\ 
One key observation in this note is that for subrepresentations principal series representations entering the Gelfand-Graev representation, the Bessel function defined in Definition 1.1 and the Whittaker function defined in Definition 1.2 are proportional to each other. Namely we have the following proposition.
\begin{prop} For any given ($\pi_{\underline{\chi}_n}^{S}, \CV_{\underline{\chi}_n}^{S}$) we have
\begin{align}
    J_{\pi_{\underline{\chi}_n}^{S}}=\frac{[G_n:B_{n}^{-}]}{|U_n|}\Psi_{\underline{\chi}_n}^{GL_n}.
\end{align}
\end{prop}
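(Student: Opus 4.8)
The plan is to use the uniqueness clause in Definition 1.1: since the Bessel function $J_{\pi_{\underline{\chi}_n}^{S}}$ is the \emph{unique} element of the Whittaker model $\CW(\pi_{\underline{\chi}_n}^{S},\psi)$ that is $U_n$-bi-equivariant and satisfies $J_{\pi_{\underline{\chi}_n}^{S}}(I_n)=1$, it suffices to show that $\tfrac{[G_n:B_{n}^{-}]}{|U_n|}\Psi_{\underline{\chi}_n}^{GL_n}$ has exactly these two properties and lies in $\CW(\pi_{\underline{\chi}_n}^{S},\psi)$. First I would record that $\Psi_{\underline{\chi}_n}^{GL_n}=W_{\psi}(\psi_n)$ already lies in $\CW(\pi_{\underline{\chi}_n}^{S},\psi)$: this is immediate from Proposition 1.2, since $\psi_n\in\CV_{\underline{\chi}_n}^{S}$ and $W_{\psi}$ maps $\CV_{\underline{\chi}_n}^{S}$ isomorphically onto $\CW(\pi_{\underline{\chi}_n}^{S},\psi)$; as the Whittaker model is a linear subspace, any scalar multiple of $\Psi_{\underline{\chi}_n}^{GL_n}$ stays in it.

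Next I would verify the $U_n$-bi-equivariance of $\Psi_{\underline{\chi}_n}^{GL_n}$. Left equivariance, $\Psi_{\underline{\chi}_n}^{GL_n}(u_1 g)=\psi(u_1)\Psi_{\underline{\chi}_n}^{GL_n}(g)$, is contained in Proposition 1.2, which asserts $W_{\psi}(\psi_n)\in Ind_{U_n}^{G_n}\psi$. For right equivariance I would compute directly from the matrix-coefficient description: for $u_2\in U_n$ and $g\in G_n$,
$$
\Psi_{\underline{\chi}_n}^{GL_n}(gu_2)=\langle\pi_{\underline{\chi}_n}(g)\pi_{\underline{\chi}_n}(u_2)\psi_n,\psi_n\rangle=\psi(u_2)\langle\pi_{\underline{\chi}_n}(g)\psi_n,\psi_n\rangle=\psi(u_2)\Psi_{\underline{\chi}_n}^{GL_n}(g),
$$
where the middle equality is Lemma 1.1. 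Combining the two equivariances gives $\Psi_{\underline{\chi}_n}^{GL_n}(u_1 g u_2)=\psi(u_1 u_2)\Psi_{\underline{\chi}_n}^{GL_n}(g)$, which is property (2) of Definition 1.1.

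Finally I would invoke equation (1.9), which gives $\Psi_{\underline{\chi}_n}^{GL_n}(I_n)=W_{\psi}(\psi_n)(I_n)=\tfrac{|U_n|}{[G_n:B_{n}^{-}]}\neq 0$. Hence the rescaled function $\tfrac{[G_n:B_{n}^{-}]}{|U_n|}\Psi_{\underline{\chi}_n}^{GL_n}$ lies in $\CW(\pi_{\underline{\chi}_n}^{S},\psi)$, is $U_n$-bi-equivariant, and takes the value $1$ at $I_n$; by the uniqueness part of Definition 1.1 it therefore coincides with $J_{\pi_{\underline{\chi}_n}^{S}}$, which is precisely (1.10). I do not expect a genuine obstacle here — everything reduces to Lemma 1.1, Proposition 1.2 and the normalization (1.9). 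The one point deserving care is that one must apply the uniqueness of the Bessel function \emph{inside} the Whittaker model of the specific constituent $\pi_{\underline{\chi}_n}^{S}$; this is exactly why the full strength of Proposition 1.2 (not merely $W_{\psi}(\psi_n)\in Ind_{U_n}^{G_n}\psi$) is invoked.
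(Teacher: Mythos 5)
Your proof is correct and follows essentially the same route as the paper: reduce to checking the two defining conditions in Definition 1.1, with Lemma 1.1 yielding $U_n$-bi-equivariance, the computation (1.8) giving the normalization at $I_n$, and the uniqueness clause of the Bessel function finishing the argument. The only (favorable) difference is that you are explicit about the rescaled function lying in $\CW(\pi_{\underline{\chi}_n}^S,\psi)$ before appealing to uniqueness — a point the paper leaves tacit, relying on the preceding proposition — and you split bi-equivariance into left and right halves where the paper does it in a single inner-product computation.
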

\begin{proof}
For any given $u_1, u_2\in U_n$ and $g\in G_n$, we have
$$
\Psi_{\underline{\chi}_n}^{GL_n}(u_1gu_2)=<\pi_{\underline{\chi}_n}(u_1gu_2)\psi_n,\psi_n>=<\pi_{\underline{\chi}_n}(g)\left(\pi_{\underline{\chi}_n}(u_2)\psi_n\right),\pi_{\underline{\chi}_n}(u_1^{-1})\psi_n>.
$$
By Lemma 1.1 we have
$$
\pi_{\underline{\chi}_n}(u_2)\psi_n=\psi(u_2)\psi,
$$
and
$$
\pi_{\underline{\chi}_n}(u_1^{-1})\psi_n=\overline{\psi(u_1)}\psi.
$$
Therefore 
$$
\Psi_{\underline{\chi}_n}^{GL_n}(u_1gu_2)=<\pi_{\underline{\chi}_n}(g)\left(\psi(u_2)\psi_n\right),\overline{\psi(u_2)}\psi_n>=\psi(u_1u_2)<\pi_{\underline{\chi}_n}(g)\psi_n,\psi_n>.
$$
Hence $\Psi_{\underline{\chi}_n}^{GL_n}$ is $U_n$-bi-equivariant. By (1.8) we have
$$
\frac{[G_n:B_{n}^{-}]}{|U_n|}\Psi_{\underline{\chi}_n}^{GL_n}(I_n)=1.
$$
Therefore the function $\frac{[G_n:B_{n}^{-}]}{|U_n|}\Psi_{\underline{\chi}_n}^{GL_n}$ satisfies the two properties stated in Definition 1.1. Hence by the uniqueness of Bessel function we have
$$
 J_{\pi_{\underline{\chi}_n}^{S}}=\frac{[G_n:B_{n}^{-}]}{|U_n|}\Psi_{\underline{\chi}_n}^{GL_n}.
$$
\end{proof}

\ 

From Proposition 1.2 we conclude that for every irreducible generic representations which appear as subrepresentations of principal series representations we may replace their Bessel functions by the Whittaker functions, which is an important observation while deriving the main results stated in the next section.
\section{Curtis homomorphisms and Gelfand-Tsetlin diagrams}

Another way to describe all irreducible generic representations is to consider the endomorphism algebra of $Ind_{U_n}^{G_n}\psi$, $End_{G_n}(Ind_{U_n}^{G_n}\psi)$, which is called the \textbf{unipotent Hecke algebra}. Steinberg \cite{St} showed that this unipotent Hecke algebra is commutative, which is equivalent to the multiplicity-free property of the representation $Ind_{U_n}^{G_n}\psi$. By Mackey's theorem, $End_{G_n}(Ind_{U_n}^{G_n}\psi)$ is isomorphic to the $U_n$-bi-equivariant functions on $G_n$, namely
$$
\CH_{\psi}(G_n,U_n)=\{f:G_n \rightarrow \IC \mid f(u_1gu_2)=\psi(u_1u_2)f(g) \quad \forall u_1, u_2\in U_n, \forall g\in G_n\},
$$
where the multiplicative structure of $\CH_{\psi}(G_n,U_n)$ is convolution of functions. Clearly we see that $\{J_{\pi} \mid \text{$\pi$ is an irreducible generic representation of $G_n$}\}$ is a basis of $\CH_{\psi}(G_n,U_n)$. Therefore the study of unipotent Hecke algebras is closely related to constructions of Bessel functions.

\ 

Let $e_{\psi_n}\IC[G_n]e_{\psi_n}$ be a subalgebra of the group algebra $\IC[G_n]$, where $e_{\psi_n}$ is an idempotent element in $\IC[G_n]$ given by
$$
e_{\psi_n}=\frac{1}{|U_n|}\sum_{u\in U_n}\overline{\psi(u)}u.
$$
It can be shown by Theorem 1.1 that $e_{\psi_n}g_{n_1,\cdots, n_s}(c_1,\cdots,c_s)e_{\psi_n}$ is a basis for $e_{\psi_n}\IC[G_n]e_{\psi_n}$. Moreover, from Section 3 of \cite{c1}, we can identify $e_{\psi_n}\IC[G_n]e_{\psi_n}$ with the unipotent Hecke algebra $\CH_{\psi}(G_n,U_n)$ via the following
\begin{align}
    e_{\psi_n}\IC[G_n]e_{\psi_n} \tilde{\rightarrow} \CH_{\psi}(G_n,U_n),\quad  e_{\psi_n}g_{n_1,\cdots, n_s}(c_1,\cdots,c_s)e_{\psi_n}\mapsto |U_n|^2C_{g_{n_1,\cdots, n_s}(c_1,\cdots,c_s)^{-1}},
\end{align}
where $C_{g_{n_1,\cdots, n_s}(c_1,\cdots,c_s)^{-1}}$ is the characteristic function on the double coset
$U_n g_{n_1,\cdots, n_s}(c_1,\cdots,c_s)^{-1} U_n$. One consequence of the above identification is that each Bessel function $J_{\pi}$ induces an algebra homomorphism from $\CH_{\psi}(G_n,U_n)$ to $\IC$, denoted by $\CJ_{\pi}$:
\begin{align}
    \CJ_{\pi}:e_{\psi_n}\IC[G_n]e_{\psi_n}\rightarrow \IC, \quad\quad e_{\psi_n}g_{n_1,\cdots, n_s}(c_1,\cdots,c_s)e_{\psi_n} \mapsto J_{\pi}(g_{n_1,\cdots, n_s}(c_1,\cdots,c_s)).
\end{align}

\ 

In his 1993 paper, Curtis (Theorem 4.2 in \cite{c1}) showed that for each each algebra homomorphism $f_{T,\theta}:e_{\psi_n}\IC[G_n]e_{\psi_n}\rightarrow \IC$, there exists a unique homomorphism of algebras $f_{T}:e_{\psi_n}\IC[G_n]e_{\psi_n}\rightarrow \IC[T], c\mapsto \sum\limits_{t\in T}f_{T}(c)(t)t$ (the \textbf{Curtis homomorphism}) independent of $\theta$, which makes the following diagram commutative:
$$
\begin{tikzcd}
e_{\psi_n}\IC[G_n]e_{\psi_n} \arrow{r}{f_{T,\theta}} \arrow{dr}{f_T} & \IC\\
&\IC[T] \arrow{u}{\widehat{\theta}}
\end{tikzcd}
$$
where $\widehat{\theta}$ is the homomorphism on group algebra $\IC[T]$ linearly extended from the character $\theta$ of the maximal torus $T$. Therefore we obtain
\begin{align}
    f_{T,\theta}(e_{\psi_n}g_{n_1,\cdots, n_s}(c_1,\cdots,c_s)e_{\psi_n})=\sum_{t\in T}f_{T}(e_{\psi_n}g_{n_1,\cdots, n_s}(c_1,\cdots,c_s)e_{\psi_n})(t)\chi(t).
\end{align}

\ 

We note that in \cite{c1} Curtis expressed $f_{T}$ in terms of Green functions over finite fields using Deligne-Lusztig theory \cite{dl}. Later with Shinoda \cite{cs}, by using the Curtis' formula obtained, they gave explicit computation for values of $f_T(g_{1,n-1}(a,1))$, where $a\in \IF_{q}^{\times}$. However their method is hard to generalize to arbitrary $g_{n_1,\cdots, n_s}(c_1,\cdots,c_s)$. 

\ 

Suppose $T=T_n$ is the maximal split torus, i.e., the subgroup of diagonal matrices, and $\theta=\underline{\chi}_n$ is a fixed character on group $T_n$, therefore by (2.2) the homomorphism $f_{T_n, \underline{\chi}_n}$ can be induced from the Bessel function $J_{\pi_{\underline{\chi}_n}^{S}}$ associated with principal series representation, namely
\begin{align}
    f_{T_n, \underline{\chi}_n}(e_{\psi_n}g_{n_1,\cdots, n_s}(c_1,\cdots,c_s)e_{\psi_n})=J_{\pi_{\underline{\chi}_n}^{S}}(g_{n_1,\cdots, n_s}(c_1,\cdots,c_s)).
\end{align}
Combining with Theorem 1.3 we obtain
\begin{align}
    f_{T_n, \underline{\chi}_n}(e_{\psi_n}g_{n_1,\cdots, n_s}(c_1,\cdots,c_s)e_{\psi_n})=\frac{[G_n:B_{n}^{-}]}{|U_n|}\Psi_{\underline{\chi}_n}^{GL_n}(g_{n_1,\cdots, n_s}(c_1,\cdots,c_s)).
\end{align}
Now by the recursive formulas obtained in Section 3 and 4, we are able to give an explicit formula for $f_{T_n}(g)(t)$, where $g$ is either in maximal parabolics or minimal parabolics, i.e., $g$ is either $g_{n-1,1}(1,a_n)$, $g_{1,n-1}(a_1,1)$, or $g_{1,\cdots, 1}(a_1,\cdots,a_n)$.

\ 
\begin{te}
For any $n\geq 2$ we have 
\begin{align}
    f_{T_n}(e_{\psi_n}g_{n-1,1}(1,a_n)e_{\psi_n})(t)=\begin{cases} \frac{1}{q^{n-1}}\psi(Tr(t^{-1})), \quad \quad det(t)=(-1)^{n-1}a_n\\ 0, \quad \quad \text{otherwise}\end{cases}.
\end{align}
Explicitly, we can parameterize $t\in T_n$ with $det(t)=(-1)^{n-1}a_n$ as follows:
$$
t=diag\{y_{1.1}, -\frac{y_{2,2}}{y_{1,1}},-\frac{y_{3,3}}{y_{2,2}},\cdots, -\frac{y_{n-1,n-1}}{y_{n-2,n-2}},-\frac{a_n}{y_{n-1,n-1}}\}.
$$
Therefore 
$$
\psi(Tr(t^{-1}))=\psi\left(\frac{1}{y_{1,1}}-\sum_{j=2}^{n-1}\frac{y_{j-1,j-1}}{y_{j,j}}-\frac{y_{n-1,n-1}}{a_n}\right),
$$
which is a summation over the arrows of the following Gelfand-Tsetlin diagram:
$$
\begin{tikzcd}
a_n &y_{n-1,n-1} \arrow[l]& y_{n-2,n-2} \arrow[l]& \cdots \arrow[l] &y_{1,1} \arrow[l]\\
  &&&& 1 \arrow[u],
\end{tikzcd}
 $$
 in the sense that each arrow $x\rightarrow y$ corresponds to $\psi(\pm \frac{x}{y})$, where the "+" sign corresponds to vertical arrow and "-" sign corresponds to horizontal arrow.
\end{te}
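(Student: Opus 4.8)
The plan is to reduce the statement, via the identity (2.5), to an explicit evaluation of the principal series Whittaker function $\Psi^{GL_n}_{\underline\chi_n}$ at $g_{n-1,1}(1,a_n)$, and then to recover the coefficient function $t\mapsto f_{T_n}(\cdot)(t)$ by running over all characters $\underline\chi_n$ of $T_n$. By (2.5) (valid for \emph{every} character $\underline\chi_n$, not just generic ones, since $\pi^S_{\underline\chi_n}$ is always generic and $(1.8)$ forces non‑vanishing at $I_n$, so Propositions 1.1 and 1.2 apply) it suffices to compute $\Psi^{GL_n}_{\underline\chi_n}(g_{n-1,1}(1,a_n))$. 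Using Proposition 1.1 with $f=\psi_n$ and the fact that $\pi_{\underline\chi_n}$ is right translation,
\[
\Psi^{GL_n}_{\underline\chi_n}(g)=\frac{1}{[G_n:B_n^-]}\sum_{u\in U_n}\psi_n(ug)\,\overline{\psi(u)},\qquad g=g_{n-1,1}(1,a_n),
\]
so the task becomes understanding the function $u\mapsto\psi_n(ug)$ on $U_n$.

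The next step unwinds $\psi_n(ug)$ through the decomposition (1.2). Since $B_n^-\cap U_n=\{I_n\}$, the ``big cell'' $B_n^-U_n$ carries a unique factorization $h=b\,u'$ with $b\in B_n^-,\ u'\in U_n$, and there $\psi_n(h)=\underline\chi_n(b)\,\psi(u')$, while $\psi_n(h)=0$ off the big cell. Thus I must (i) identify the $u$ with $ug\in B_n^-U_n$ and (ii) on that locus compute the diagonal of $b$ and the superdiagonal of $u'$ — the only data entering $\underline\chi_n(b)$ and $\psi(u')$. Writing $u=I_n+\sum_{i<j}x_{ij}E_{ij}$ and using $g_{n-1,1}(1,a_n)\colon e_1\mapsto a_ne_n,\ e_j\mapsto e_{j-1}\ (j\geq2)$, the columns of $M:=ug$ are explicit, and a one‑line cofactor expansion along the bottom row yields the leading principal minors $\Delta_k(M)=(-1)^{k+1}a_n x_{kn}$ for $1\le k\le n-1$ and $\Delta_n(M)=(-1)^{n-1}a_n$. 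Hence $ug\in B_n^-U_n$ precisely when $x_{1n},\dots,x_{n-1,n}\in\IF_q^\times$, and on that locus the standard minor formulas for the $LU$‑factorization give $b_{11}=a_nx_{1n}$, $b_{kk}=\Delta_k/\Delta_{k-1}=-x_{kn}/x_{k-1,n}$ $(2\le k\le n-1)$, $b_{nn}=-1/x_{n-1,n}$, and $u'_{12}=1/(a_nx_{1n})$, $u'_{k,k+1}=x_{k-1,k}-x_{k-1,n}/x_{kn}$ $(2\le k\le n-1)$.

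The crucial observation is that $\underline\chi_n(b)$ depends only on $x_{1n},\dots,x_{n-1,n}$, and that in $\psi(u')\overline{\psi(u)}$ the interior variables $x_{12},\dots,x_{n-2,n-1}$ cancel, leaving $\psi\!\bigl(\tfrac{1}{a_nx_{1n}}-\sum_{k=1}^{n-2}\tfrac{x_{kn}}{x_{k+1,n}}-x_{n-1,n}\bigr)$, again a function of the last column alone. The sum over $U_n$ then factors: the $\binom{n-1}{2}$ remaining variables contribute $q^{\binom{n-1}{2}}$, and the change of variables $t_1=a_ny_1,\ t_k=-y_k/y_{k-1}\ (2\le k\le n-1),\ t_n=-1/y_{n-1}$ (with $y_k:=x_{kn}$) is a bijection from $(\IF_q^\times)^{n-1}$ onto $\{t\in T_n:\det t=(-1)^{n-1}a_n\}$ under which the character factor becomes $\underline\chi_n(t)$ and the argument of $\psi$ becomes $\sum_i t_i^{-1}=\Tr(t^{-1})$. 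Since $q^{\binom{n-1}{2}}/|U_n|=q^{-(n-1)}$, formula (2.5) gives
\[
f_{T_n,\underline\chi_n}\bigl(e_{\psi_n}g_{n-1,1}(1,a_n)e_{\psi_n}\bigr)=\frac{1}{q^{\,n-1}}\sum_{\substack{t\in T_n\\ \det t=(-1)^{n-1}a_n}}\psi\bigl(\Tr(t^{-1})\bigr)\,\underline\chi_n(t).
\]
Comparing with (2.3) and using that the characters of the finite abelian group $T_n$ span the dual of $\IC[T_n]$, one reads off the coefficient function; the Gelfand--Tsetlin diagram reformulation is just the rewriting of $\Tr(t^{-1})$ in the stated parametrization of $t$.

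I expect the main obstacle to be the second step: carrying out the Gauss--Bruhat (i.e.\ $LU$) decomposition of $ug_{n-1,1}(1,a_n)$ explicitly enough to pin down the support condition $x_{kn}\neq0$ and, more importantly, to see that the diagonal of $b$ and the superdiagonal of $u'$ — hence the whole summand — depend only on the last column of $u$; controlling the signs and the cancellation of the interior variables is the delicate part. I also note that this computation is precisely the content, specialized to the maximal parabolic $g_{n-1,1}(1,a_n)$ and with the recursion resolved, of the Gelfand--Tsetlin formula of Section~3, so the theorem may alternatively be deduced from that formula together with (2.5).
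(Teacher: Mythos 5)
Your argument is correct, and at bottom it runs on exactly the same engine as the paper's: you reduce to $\Psi^{GL_n}_{\underline\chi_n}(g_{n-1,1}(1,a_n))$ via (2.5), compute $\psi_n(u g)$ by the Gauss--Bruhat decomposition on the big cell, find the leading minors $\Delta_k$ and neighbouring minors $\Delta_{k,k+1}$, observe that the superdiagonal entries of $u$ cancel between $\psi(u')$ and $\overline{\psi(u)}$ so the summand depends only on the last column of $u$, integrate out the remaining $\binom{n-1}{2}$ free entries to pick up $q^{(n-1)(n-2)/2}$, and change variables onto $\{t\in T_n:\det t=(-1)^{n-1}a_n\}$. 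This is precisely the computation that the paper carries out in Section 3 (where it records $\Delta_i(uw_1)=(-1)^{i+1}z_{i,n}$ and $\Delta_{i,i+1}(uw_1)=(-1)^{i+1}z_{i,n}z_{i-1,i}+(-1)^{i}z_{i-1,n}$, matching your formulas up to the $a_n$ you keep inside $M=ug$). The only genuine difference is organizational: the paper packages the calculation as a rank recursion (Theorem 3.1), resolves it to obtain equation (2.7), and then reads off the theorem; you skip the recursive intermediate and go directly to the resolved formula, and you work in the raw matrix entries $x_{kn}$ rather than the paper's multiplicative coordinates $y_{k,k}$ (related by $x_{kn}=\prod_{j\ge k}y_{j,j}$). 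The recursion buys the paper a uniform Gelfand--Tsetlin interpretation shared with the minimal-parabolic case of Section 4; your direct route buys self-containment, which is welcome since the paper's proof of this theorem otherwise leans entirely on Section 3. Two small things worth making explicit if you wrote this up: the final comparison uses linear independence of characters of the finite abelian group $T_n$, exactly as in the paper's ``comparing (2.9) and (2.10)''; and one needs (2.4) for every $\underline\chi_n$, which your parenthetical correctly justifies via Kilmoyer's result that $\pi^S_{\underline\chi_n}$ is always generic.
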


\begin{proof}
For each $n\geq 2$ we have the following
\begin{align}
    \Psi_{\underline{\chi}_{n}}^{GL_{n}}(g_{n-1,1}(1,a_n))=F'_n\sum_{\substack{y_{j,j}\in \IF_{q}^{\times} \\ 0\leq j\leq n}}\prod_{j=1}^{n} \chi_j\left(-\frac{y_{j,j}}{y_{j-1,j-1}}\right)\psi\left(-\sum_{j=1}^n\frac{y_{j-1,j-1}}{y_{j,j}}\right),
\end{align}
where $F'_n=\frac{|U_{n-1}|}{[G_n:B_{n}^{-}]}$ and we assume that $y_{n,n}=a_n$ and $y_{0,0}=-1$. Since for $n=2$ equation (2.7) is simply equation (3.6), and for $n\geq 3$ equation (2.7) is obtained by resolving the recursive relation given in Theorem 3.1.
Let $d_j=-\frac{y_{j,j}}{y_{j-1,j-1}}$ we obtain
\begin{align}
  \frac{[G_n:B_{n}^{-}]}{|U_n|} \Psi_{\underline{\chi}_{n}}^{GL_{n}}(g_{n-1,1}(1,a_n))&=\frac{|U_{n-1}|}{|U_n|}\sum_{\substack{y_{j,j}\in \mathbb{F}_{q}^{\times} \\ 0\leq j\leq n}}\prod_{j=1}^{n}\chi_{j}\left(d_j\right)\psi\left(\sum_{j=1}^n\frac{1}{d_j}\right)\\
  &=\frac{1}{q^{n-1}}\sum_{\substack{y_{j,j}\in \mathbb{F}_{q}^{\times} \\ 0\leq j\leq n}}\prod_{j=1}^{n}\chi_{j}\left(d_j\right)\psi(Tr(\text{diag}_{1\leq j\leq n}(d_j)^{-1})).
\end{align}
By equation (2.3) and (2.5) we have
\begin{align}
    \frac{[G_n:B_{n}^{-}]}{|U_n|} \Psi_{\underline{\chi}_{n}}^{GL_{n}}(g_{n-1,1}(1,a_n))=\sum_{t\in T_n}f_{T_n}(e_{\psi_n}g_{n-1,1}(1,a_n)e_{\psi_n})(t)\underline{\chi}_n(t).
\end{align}
Hence Theorem 2.1 follows by comparing equation (2.9) and (2.10).
 
\end{proof}
\ 

Similarly we have the following theorem for the case $g=g_{1,n-1}(a_1,1)$.
\begin{te}
For each $n\geq 2$ we have
\begin{align}
    f_{T_n}(e_{\psi_n}g_{1,n-1}(a_1,1)e_{\psi_n})(t)=\begin{cases} \frac{1}{q^{n-1}}\psi(-Tr(t)), \quad \quad det(t)=(-1)^{n-1}a_1\\ 0, \quad \quad \text{otherwise}\end{cases},
\end{align}
Explicitly, we can parameterize $t\in T_n$ with $det(t)=(-1)^{n-1}a_1$ as follows:
$$
t=diag\{\frac{y_{1.1}}{1}, -\frac{y_{2,1}}{y_{1,1}},-\frac{y_{3,1}}{y_{2,1}},\cdots, -\frac{y_{n-1,1}}{y_{n-2,1}},-\frac{a_1}{y_{n-1,1}}\}.
$$
Therefore 
$$
\psi(-Tr(t))=\psi\left(-\frac{y_{1,1}}{1}+\sum_{j=2}^{n-1}\frac{y_{j,1}}{y_{j-1,1}}+\frac{a_1}{y_{n-1,1}}\right),
$$
which is a summation over the arrows of the following Gelfand-Tsetlin diagram:
$$
\begin{tikzcd}
1 &y_{1,1} \arrow[l]\\
   &y_{2,1} \arrow[u]   \\
&\vdots \arrow[u]\\
 & y_{n-2,1}\arrow[u] \\
& y_{n-1,1}\arrow[u] \\
&a_1\arrow[u]
\end{tikzcd}
 $$
  In the sense that each arrow $x\rightarrow y$ corresponds to $\psi(\pm \frac{x}{y})$, where the "+" sign corresponds to vertical arrow and "-" sign corresponds to horizontal arrow.
\end{te}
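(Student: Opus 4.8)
The plan is to transcribe the proof of Theorem 2.1, feeding in the Gelfand-Tsetlin formula of Section 3 (Theorem 3.2) for the value of $\Psi_{\underline{\chi}_n}^{GL_n}$ on $g_{1,n-1}(a_1,1)$ in place of the one on $g_{n-1,1}(1,a_n)$, and then carrying out the analogous change of variables and comparison with the Curtis factorization identities (2.3) and (2.5).

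First I would record the closed form of the relevant Whittaker value. Resolving the rank recursion of Theorem 3.2 (with $n=2$ as the base case computed directly in Section 3) should produce, for every $n\ge 2$, an expression of the shape
\begin{align*}
\Psi_{\underline{\chi}_n}^{GL_n}(g_{1,n-1}(a_1,1))
=F'_n\sum_{\substack{y_{j,1}\in\IF_q^\times\\ 1\le j\le n-1}}\;\prod_{j=1}^{n}\chi_j\!\left(-\frac{y_{j,1}}{y_{j-1,1}}\right)\;
\psi\!\left(\sum_{j=1}^{n}\frac{y_{j,1}}{y_{j-1,1}}\right),
\end{align*}
with $F'_n=|U_{n-1}|/[G_n:B_n^-]$ and the conventions $y_{0,1}=-1$, $y_{n,1}=a_1$. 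This is the $g_{1,n-1}$-analogue of (2.7); here the numerators carry the larger index and the denominators the smaller one, the reverse of (2.7), which corresponds to the switch from $g_{n-1,1}(1,a_n)$ to $g_{1,n-1}(a_1,1)$ and is exactly what makes the trace enter below as $-Tr(t)$ rather than $Tr(t^{-1})$.

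Next I would put $d_j:=-y_{j,1}/y_{j-1,1}$ for $1\le j\le n$. As $(y_{1,1},\dots,y_{n-1,1})$ runs over $(\IF_q^\times)^{n-1}$, the tuple $(d_1,\dots,d_n)$ runs bijectively over the elements of $(\IF_q^\times)^{n}$ with $\prod_{j=1}^{n}d_j=(-1)^{n-1}a_1$; equivalently $t:=\diag(d_1,\dots,d_n)$ runs over $\{\,t\in T_n:\det(t)=(-1)^{n-1}a_1\,\}$. Under this substitution $\prod_{j=1}^{n}\chi_j(d_j)=\underline{\chi}_n(t)$ and $\sum_{j=1}^{n}y_{j,1}/y_{j-1,1}=-\sum_{j=1}^{n}d_j=-Tr(t)$, so multiplying by $[G_n:B_n^-]/|U_n|$ and using $|U_{n-1}|/|U_n|=1/q^{n-1}$ gives
\begin{align*}
\frac{[G_n:B_n^-]}{|U_n|}\,\Psi_{\underline{\chi}_n}^{GL_n}(g_{1,n-1}(a_1,1))
=\frac{1}{q^{n-1}}\sum_{\substack{t\in T_n\\ \det(t)=(-1)^{n-1}a_1}}\underline{\chi}_n(t)\,\psi(-Tr(t)).
\end{align*}
On the other hand, (2.3) together with (2.5) gives $\frac{[G_n:B_n^-]}{|U_n|}\Psi_{\underline{\chi}_n}^{GL_n}(g_{1,n-1}(a_1,1))=\sum_{t\in T_n}f_{T_n}(e_{\psi_n}g_{1,n-1}(a_1,1)e_{\psi_n})(t)\,\underline{\chi}_n(t)$. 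Since both identities hold for every character $\underline{\chi}_n$ of $T_n$ while $f_{T_n}$ is independent of $\underline{\chi}_n$, linear independence of the characters of the abelian group $T_n$ forces the two coefficient functions on $T_n$ to coincide, which is precisely the assertion (2.11). The Gelfand-Tsetlin reading then follows at once: expressing a $t$ with $\det(t)=(-1)^{n-1}a_1$ in the coordinates $y_{1,1},\dots,y_{n-1,1}$ gives $t=\diag\{y_{1,1}/1,\,-y_{2,1}/y_{1,1},\dots,-a_1/y_{n-1,1}\}$, and $\psi(-Tr(t))=\psi\big(-y_{1,1}+\sum_{j=2}^{n-1}y_{j,1}/y_{j-1,1}+a_1/y_{n-1,1}\big)$ is visibly the sum over the arrows of the displayed diagram.

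The substantive ingredient is the Whittaker formula above, that is, Theorem 3.2 and the explicit resolution of its recursion over the rank; granting it, the present argument is just a transcription of the proof of Theorem 2.1. The one place that demands attention here is the sign bookkeeping in the substitution $d_j=-y_{j,1}/y_{j-1,1}$: one must check simultaneously that the determinant constraint reads $\prod_j d_j=(-1)^{n-1}a_1$, that $\prod_j\chi_j(d_j)$ reassembles into $\underline{\chi}_n(t)$, and that the argument of $\psi$ turns into $-Tr(t)$ and not $Tr(t)$ or $Tr(t^{-1})$; after that, passing from an equality of $\underline{\chi}_n$-weighted sums to an equality of coefficient functions is the routine appeal to orthogonality of characters.
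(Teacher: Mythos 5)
Your proposal is correct and takes essentially the same route as the paper: you resolve the rank recursion of Theorem~3.2 to get the closed form of $\Psi_{\underline{\chi}_n}^{GL_n}(g_{1,n-1}(a_1,1))$ (the paper's equation (2.12)), substitute $d_j=-y_{j,1}/y_{j-1,1}$ to rewrite the sum as $\frac{1}{q^{n-1}}\sum_t\underline{\chi}_n(t)\psi(-Tr(t))$ over the slice $\det(t)=(-1)^{n-1}a_1$, and then compare with the Curtis factorization identities (2.3) and (2.5). The only difference is that you make explicit the appeal to linear independence of the characters $\underline{\chi}_n$ of $T_n$ when matching coefficient functions, a step the paper compresses into the phrase "by comparing (2.14) and (2.15)."
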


\begin{proof}
For each $n\geq 2$ we have the following
\begin{align}
    \Psi_{\underline{\chi}_{n}}^{GL_{n}}(g_{1,n-1}(a_{n-1},1))=F'_n\sum_{\substack{y_{j,j}\in \IF_{q}^{\times} \\ 0\leq j\leq n}}\prod_{j=1}^{n} \chi_j\left(-\frac{y_{j,1}}{y_{j-1,1}}\right)\psi\left(\sum_{j=1}^n\frac{y_{j,1}}{y_{j-1,1}}\right),
\end{align}
where $F'_n=\frac{|U_{n-1}|}{[G_n:B_{n}^{-}]}$ and we assume that $y_{n,1}=a_1$ and $y_{0,1}=-1$. Since for $n=2$ equation (2.12) is simply equation (3.6), and for $n\geq 3$ equation (2.12) is obtained by resolving the recursive relation given Theorem 3.2.
Let $d_j=-\frac{y_{j,1}}{y_{j-1,1}}$ we obtain
\begin{align}
  \frac{[G_n:B_{n}^{-}]}{|U_n|} \Psi_{\underline{\chi}_{n}}^{GL_{n}}(g_{n-1,1}(1,a_n))&=\frac{|U_{n-1}|}{|U_n|}\sum_{\substack{y_{j,j}\in \mathbb{F}_{q}^{\times} \\ 0\leq j\leq n}}\prod_{j=1}^{n}\chi_{j}\left(d_j\right)\psi\left(-\sum_{j=1}^nd_j\right)\\
  &=\frac{1}{q^{n-1}}\sum_{\substack{y_{j,j}\in \mathbb{F}_{q}^{\times} \\ 0\leq j\leq n}}\prod_{j=1}^{n}\chi_{j}\left(d_j\right)\psi(-Tr(\text{diag}_{1\leq j\leq n}(d_j))).
\end{align}
By equation (2.3) and (2.5) we have
\begin{align}
    \frac{[G_n:B_{n}^{-}]}{|U_n|} \Psi_{\underline{\chi}_{n}}^{GL_{n}}(g_{1,n-1}(a_1,1))=\sum_{t\in T_n}f_{T_n}(e_{\psi_n}g_{1,n-1}(a_1,1)e_{\psi_n})(t)\underline{\chi}_n(t).
\end{align}
Hence Theorem 2.2 follows by comparing equation (2.14) and (2.15).
 
\end{proof}

\begin{rem}
In Theorem 2.2 we give an alternative proof of Theorem 3.3 in \cite{cs} without using Deligne-Lusztig theory. Indeed, what Curtis and Shinoda have shown is that
\begin{align}
    f_{T_n}(q^{n-1}(g_{1,n-1}(a_1,1)))(t)=\begin{cases} \psi(-Tr(t)), \quad \quad det(t)=(-1)^{n-1}a_1\\ 0, \quad \quad \text{otherwise}\end{cases},
\end{align}
and the constant $q^{n-1}$ equals to $ind(g_{1,n-1}(a_1,1))=\frac{|U_ng_{1,n-1}(a_1,1)U_n|}{|U_n|}$.
Our argument illustrates that $ind(g_{1,n-1}(a_1,1))$ is determined by the recursive formulas of Whittaker functions if the homomorphism $f_{T_n, \underline{\chi}_n}$ was chosen proproperly.
\end{rem}

\ 

The same argument applies when $g=g_{1,\cdots, 1}(a_1,\cdots,a_n)$. Note that we have

\begin{te}
For each $n\geq 2$, $f_{T_n}(g_{1,\cdots, 1}(a_1,\cdots,a_n))(t)\neq 0$ if and only if  $det(t)=(-1)^{\frac{n(n-1)}{2}}a_1\cdots a_n$, where we could parameterize $t$ by $\text{diag}_{1\leq j\leq n}(d_j)$ with
$$
d_j=(-1)^{j-1}y_{j,j}\prod_{k=1}^{j-1}\frac{y_{j,k}}{y_{j-1,k}},
$$
where for all $k$, $y_{n,k}=a_k$, $y_{0,k}=1$. Therefore we have
\begin{align}
    f_{T_n}(e_{\psi_n}g_{1,\cdots, 1}(a_1,\cdots,a_n)e_{\psi_n})(\text{diag}_{1\leq j\leq n}(d_j)) \nonumber \\
    =\frac{1}{|U_n|}\psi\left(\sum_{i=2}^n\sum_{k=1}^{i-1}\left(\frac{y_{i,k}}{y_{i-1,k}}-\frac{y_{i-1,k}}{y_{i,k+1}}\right)\right),
\end{align}
which is a summation over the arrows of the following Gelfand-Tsetlin diagram
$$
\begin{tikzcd}
a_n &y_{n-1,n-1}\arrow[l]  &y_{n-2,n-2}\arrow[l]&\cdots \arrow[l] &y_{2,2} \arrow[l] &y_{1,1} \arrow[l]\\
  & a_{n-1} \arrow[u]&y_{n-1,n-2}\arrow[u]  \arrow[l]  & \cdots \arrow[l] &y_{3,2} \arrow[u] \arrow[l]  &y_{2,1} \arrow[u]  \arrow[l] \\
   &&\ddots &\ddots &\vdots \arrow[u]&\vdots \arrow[u]\\
   && &a_3\arrow[u] &y_{n-1,2} \arrow[u] \arrow[l] & y_{n-2,1}\arrow[u] \arrow[l]\\
   &&&& a_2\arrow[u]& y_{n-1,1}\arrow[u] \arrow[l]\\
   &&&&&a_1\arrow[u]
\end{tikzcd}
$$
In the sense that each arrow $x\rightarrow y$ corresponds to $\psi(\pm \frac{x}{y})$, where the "+" sign corresponds to vertical arrow and "-" sign corresponds to horizontal arrow.
\end{te}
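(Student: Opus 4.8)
The proof runs parallel to those of Theorems 2.1 and 2.2. The first step is to record the closed Gelfand--Tsetlin form of the anti-diagonal Whittaker function: resolving the rank recursion of Theorem 4.1, with base case $n=2$ supplied by equation $(3.6)$, one obtains
$$
\Psi_{\underline{\chi}_n}^{GL_n}\big(g_{1,\dots,1}(a_1,\dots,a_n)\big)=\frac{1}{[G_n:B_n^{-}]}\sum_{\substack{y_{i,k}\in\IF_q^{\times}\\ 1\le k\le i\le n-1}}\ \prod_{j=1}^{n}\chi_j(d_j)\ \psi\!\left(\sum_{i=2}^{n}\sum_{k=1}^{i-1}\Big(\frac{y_{i,k}}{y_{i-1,k}}-\frac{y_{i-1,k}}{y_{i,k+1}}\Big)\right),
$$
where $y_{n,k}=a_k$, $y_{0,k}=1$, and $d_j=(-1)^{j-1}y_{j,j}\prod_{k=1}^{j-1}\frac{y_{j,k}}{y_{j-1,k}}$. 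Here the normalisation constants accumulated over the successive reductions telescope to $|U_1|/[G_n:B_n^{-}]=1/[G_n:B_n^{-}]$, and the two families of terms inside $\psi$ assemble, arrow by arrow, into the sum attached to the Gelfand--Tsetlin diagram in the statement: the $-$ (horizontal) and $+$ (vertical) contributions appear at each recursion step exactly as they do in the diagrams of Theorems 2.1 and 2.2.

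Next, as in the earlier proofs, one substitutes $t=\mathrm{diag}_{1\le j\le n}(d_j)$. Since $\underline{\chi}_n\big(\mathrm{diag}_{1\le j\le n}(d_j)\big)=\prod_{j=1}^{n}\chi_j(d_j)$, grouping the sum above by the value of this diagonal matrix gives
$$
\frac{[G_n:B_n^{-}]}{|U_n|}\,\Psi_{\underline{\chi}_n}^{GL_n}\big(g_{1,\dots,1}(a_1,\dots,a_n)\big)=\frac{1}{|U_n|}\sum_{t\in T_n}\Bigg(\ \sum_{\substack{(y_{i,k}):\ \mathrm{diag}_{1\le j\le n}(d_j)=t}}\ \psi\!\left(\sum_{i=2}^{n}\sum_{k=1}^{i-1}\Big(\frac{y_{i,k}}{y_{i-1,k}}-\frac{y_{i-1,k}}{y_{i,k+1}}\Big)\right)\Bigg)\underline{\chi}_n(t).
$$
A telescoping computation—for each fixed $k$, $\prod_{j=k+1}^{n}\frac{y_{j,k}}{y_{j-1,k}}=\frac{a_k}{y_{k,k}}$—gives $\prod_{j=1}^{n}d_j=(-1)^{\frac{n(n-1)}{2}}a_1\cdots a_n$, so the inner sum is empty, hence vanishes, unless $\det t=(-1)^{\frac{n(n-1)}{2}}a_1\cdots a_n$; and for $t$ in this coset the fibre $\{(y_{i,k}):\mathrm{diag}_{1\le j\le n}(d_j)=t\}$ is non-empty (by an immediate induction on $n$: $d_1,\dots,d_{n-1}$ may be prescribed arbitrarily and then $d_n$ is forced).

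Finally, by $(2.3)$ and $(2.5)$ one has, for every character $\underline{\chi}_n$ of $T_n$,
$$
\frac{[G_n:B_n^{-}]}{|U_n|}\,\Psi_{\underline{\chi}_n}^{GL_n}\big(g_{1,\dots,1}(a_1,\dots,a_n)\big)=\sum_{t\in T_n}f_{T_n}\big(e_{\psi_n}g_{1,\dots,1}(a_1,\dots,a_n)e_{\psi_n}\big)(t)\,\underline{\chi}_n(t),
$$
the generalised Steinberg constituent $\pi^{S}_{\underline{\chi}_n}$ being generic for arbitrary $\underline{\chi}_n$ so that Proposition 1.2 applies. Since the characters of $T_n$ form a basis of the space of $\IC$-valued functions on $T_n$, comparing the coefficients of $\underline{\chi}_n(t)$ in the last two displays yields
$$
f_{T_n}\big(e_{\psi_n}g_{1,\dots,1}(a_1,\dots,a_n)e_{\psi_n}\big)(t)=\frac{1}{|U_n|}\sum_{\substack{(y_{i,k}):\ \mathrm{diag}_{1\le j\le n}(d_j)=t}}\psi\!\left(\sum_{i=2}^{n}\sum_{k=1}^{i-1}\Big(\frac{y_{i,k}}{y_{i-1,k}}-\frac{y_{i-1,k}}{y_{i,k+1}}\Big)\right),
$$
which is $(2.17)$ once the right-hand side is read, in accordance with the Gelfand--Tsetlin-diagram interpretation, as the sum over all fillings $(y_{i,k})$ of the diagram with the boundary entries $\{a_k\}$ fixed and $t=\mathrm{diag}_{1\le j\le n}(d_j)$ prescribed; in particular the value is $0$ whenever $\det t\neq(-1)^{\frac{n(n-1)}{2}}a_1\cdots a_n$.

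I expect the genuine work to lie in the first step: unwinding the recursion of Theorem 4.1 into the single double sum with the correct indexing, that is, checking that the accumulated additive argument is exactly $\sum_{i=2}^{n}\sum_{k=1}^{i-1}\big(\frac{y_{i,k}}{y_{i-1,k}}-\frac{y_{i-1,k}}{y_{i,k+1}}\big)$ on the triangular array and that the accumulated multiplicative arguments telescope to $\prod_{j}\chi_j(d_j)$ with the stated $d_j$. The delicate point is that both families of arrows—the horizontal ones contributing $-$ terms in the manner of Theorem 2.1 and the vertical ones contributing $+$ terms in the manner of Theorem 2.2—must be tracked simultaneously through all the reduction steps, and the row indices of the intermediate Gelfand--Tsetlin patterns must be matched to the positions $(i,k)$ of the full triangular array.
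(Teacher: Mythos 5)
Your proof follows essentially the same route as the paper: unwind the recursion of Theorem~4.1 using the explicit kernel $(4.3)$, substitute $t=\mathrm{diag}_{1\le j\le n}(d_j)$, and compare with the defining relation of $f_{T_n}$ via $(2.3)$ and $(2.5)$. You are, however, more careful than the paper at two points which the paper glosses over: you verify the determinant constraint $\prod_j d_j=(-1)^{n(n-1)/2}a_1\cdots a_n$ by an explicit telescoping product, and you make explicit that since the map $(y_{i,k})\mapsto\mathrm{diag}_j(d_j)$ has large fibres (there are $\tfrac{n(n-1)}{2}$ free $y$'s but only $n-1$ free $d$'s once $d_n$ is forced), the right-hand side of $(2.17)$ must be read as a sum over the fibre of fillings consistent with $t$ — which is indeed the intended reading of the "summation over the arrows of the Gelfand--Tsetlin diagram," but the paper's formula as written leaves this implicit.
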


\begin{proof}
Resolving the recursive relations for minimal parabolics given in Theorem 4.1 we obtain
\begin{align}
     \Psi_{\underline{\chi}_{n}}^{GL_{n}}(g_{1,\cdots, 1}(a_1,\cdots,a_n))=F_n\sum_{\substack{y_{j,k}\in \IF_{q}^{\times} \\ 0\leq j\leq n\\ 1\leq k\leq j}}&\prod_{j=1}^{n}  \chi_j\left((-1)^{j-1}y_{j,j} \prod_{k=1}^{j-1}\frac{y_{j,k}}{y_{j-1,k}}\right)\nonumber \\  &\times \prod_{j=2}^{n}\widetilde{Q}_{GL_{j-1}}^{GL_j}(y_{j,1},\cdots,y_{j,j};y_{j-1,1},\cdots, y_{j-1,j-1}|\chi_{j}),
\end{align}
where $F_n=\frac{1}{[G_n:B_{n}^{-}]}$ and we assume that for all $k$, $y_{n,k}=a_k$, $y_{0,k}=1$. 

Let $d_j=(-1)^{j-1}y_{j,j}\prod_{k=1}^{j-1}\frac{y_{j,k}}{y_{j-1,k}}$, we have
\begin{align}
    \frac{[G_n:B_{n}^{-}]}{|U_n|} \Psi_{\underline{\chi}_{n}}^{GL_{n}}(g_{1,\cdots, 1}(a_1,\cdots,a_n))=\sum_{\substack{y_{j,k}\in \IF_{q}^{\times} \\ 0\leq j\leq n\\ 1\leq k\leq j}}&\prod_{j=1}^{n}  \chi_j(d_j) \nonumber \\
    &\times\frac{1}{|U_n|}\prod_{j=2}^{n}\widetilde{Q}_{GL_{j-1}}^{GL_j}(y_{j,1},\cdots,y_{j,j};y_{j-1,1},\cdots, y_{j-1,j-1}|\chi_{j}).
\end{align}

By equation (2.3) and (2.5) we have
\begin{align}
    \frac{[G_n:B_{n}^{-}]}{|U_n|} \Psi_{\underline{\chi}_{n}}^{GL_{n}}(g_{1,\cdots, 1}(a_1,\cdots,a_n))=\sum_{t\in T_n}f_{T_n}(e_{\psi_n}g_{1,\cdots, 1}(a_1,\cdots,a_n)e_{\psi_n})(t)\underline{\chi}_n(t).
\end{align}
Hence by comparing (2.18) and (2.19) we obtain
\begin{align}
    f_{T_n}(e_{\psi_n}g_{1,\cdots, 1}(a_1,\cdots,a_n)e_{\psi_n})(\text{diag}_{1\leq j\leq n}(d_j)) \nonumber \\
    =\frac{1}{|U_n|}\prod_{j=2}^{n}\widetilde{Q}_{GL_{j-1}}^{GL_j}(y_{j,1},\cdots,y_{j,j};y_{j-1,1},\cdots, y_{j-1,j-1}|\chi_{j}).
\end{align}
Equation (2.17) is obtained by (2.21) and (4.3).
\end{proof}

\begin{rem}
We give explicit computation of Curtis homomorphism for both maximal and minimal parabolics in Theorem 2.1-2.3 and show that they are indeed summations over corresponding Gelfand-Tsetlin diagrams. Note that by equation (2.17) we have
\begin{align}
    ind(g_{1,\cdots, 1}(a_1,\cdots,a_n))=|U_n|.
\end{align}
\end{rem}

\section{Gelfand-Tsetlin formulas for maximal parabolics}

Suppose $n\geq 3$. Let $w_1=\left(\begin{array}{c|ccc}
  \begin{smallmatrix}
  0 \\ \vdots \\ 0
  \end{smallmatrix}
  & \begin{smallmatrix}
  1 & &0 \\
      &\ddots & \\
   0&    &1\\
\end{smallmatrix} \\
\hline 
  \begin{smallmatrix}
  1
  \end{smallmatrix} &
  \begin{smallmatrix}
  0&  \cdots& 0
  \end{smallmatrix}
\end{array}\right)$
, and $a=\left(\begin{smallmatrix}
  1& & & \\
   & \ddots &  &\\
     & &1 & \\
     &&& a_n
\end{smallmatrix}\right)$, we aim to compute $\Psi_{\underline{\chi}_n}^{GL_n}(aw_1)$, remark that 
$$
\Psi_{\underline{\chi}_n}^{GL_n}(aw_1)=<\pi_{\underline{\chi}_n}(aw_1)\psi_n,\psi_n>=<\pi_{\underline{\chi}_n}(w_1)\psi_n,\pi_{\underline{\chi}_n}(a^{-1})\psi_n>
$$
Let $u\in U_n$ be arbitrary, for any $1\leq i<j\leq n$ we denote the $(i,j)$ entry of $u$ by $z_{i,j}$, and we have the following parametrization of $u$:
$$
u=(I_n+y_{1,1}E_{1,2})(I_n+y_{2,1}E_{1,2}+y_{2,2}E_{2,3})\cdots(I_n +\sum_{i=1}^{k}y_{k,i}E_{i,i+1}).
$$
For simplicity we write the following short-hand notation for this parametrization for the rest of the note.
$$
u=\prod_{k=1}^{n-1}(I_n +\sum_{i=1}^{k}y_{k,i}E_{i,i+1}).
$$
By direct computation we have
$$
\pi_{\underline{\chi}_n}(a^{-1})\psi_n(u)=\chi_{n}(a_{n}^{-1})\psi\left(\sum_{i=1}^{n-2}z_{i,i+1}+\frac{z_{n-1,n}}{a_n}\right).
$$
Moreover,
$$
\pi_{\underline{\chi}_n}(w_1)\psi_n(u)=\prod_{i=1}^{n}\chi_i\left(\frac{\Delta_i(uw_1)}{\Delta_{i-1}(uw_1)}\right)\psi\left(\sum_{i=1}^{n-1}\frac{\Delta_{i,i+1}(uw_1)}{\Delta_i(uw_1)}\right),
$$
where $\Delta_{i}(uw_1)$ denotes the principal $i\times i$ minor of the matrix $uw_1$ and $\Delta_{i,i+1}$ denotes the determinant obtained from $\Delta_{i}(uw_1)$ via replacing the $i$-th and $i+1$-st column in $uw_1$, and we assume that $\Delta_0=1$.
Note that we have
$$
z_{i,n}=\prod_{k=i}^{n-1}y_{k,k},
$$
Therefore for all $i\geq 1$,
$$
\Delta_{i}(uw_1)=(-1)^{i+1}z_{i,n}=(-1)^{i+1}\prod_{k=i}^{n-1}y_{k,k}.
$$
For $i>1$ we have
$$
\Delta_{i,i+1}(uw_1)=(-1)^{i+1}z_{i,n}z_{i-1,i}+(-1)^{i}z_{i-1,n},
$$
and $\Delta_{1,2}=1$.
Let $F_n=[G_n:B_{n}^{-}]^{-1}$, we obtain
\begin{align}
    \Psi_{\underline{\chi}_n}^{GL_n}(aw_1)&=F_n\sum_{\substack{z_{i,j}\\ 1<i<j<n}}\prod_{i=1}^{n}\chi_i\left(\frac{\Delta_i(uw_1)}{\Delta_{i-1}(uw_1)}\right)\psi\left(\sum_{i=1}^{n-1}\frac{\Delta_{i,i+1}(uw_1)}{\Delta_i(uw_1)}\right)\chi_{n}(a_{n})\psi\left(-\sum_{i=1}^{n-2}z_{i,i+1}-\frac{z_{n-1,n}}{a_n}\right) \\
    &=F_n\sum_{\substack{z_{i,j}\\ 1<i<j<n}}\chi_{n}(a_{n})\prod_{i=1}^{n}\chi_i\left(\frac{\Delta_i}{\Delta_{i-1}}\right)\psi\left(\frac{1}{z_{1,n}}+\sum_{i=2}^{n-1}z_{i-1,i}-\sum_{i=2}^{n}\frac{z_{i-1,n}}{z_{i,n}} -\sum_{i=1}^{n-2}z_{i,i+1}-\frac{z_{n-1,n}}{a_n}\right)
\end{align}
By (3.2), $z_{i,i+1}=\sum\limits_{k=i}^{n}y_{k,i}$ has no contribution to the sum, therefore we have the following
\begin{align}
    \Psi_{\underline{\chi}_n}^{GL_n}(aw_1)=q^{\frac{(n-1)(n-2)}{2}}F_n &\sum_{\substack{y_{i,i}\\ 1\leq i\leq n-1}}\chi_1(y_{1,1}\cdots y_{n-1,n-1})\chi_{n}\left(-\frac{a_{n}}{y_{n-1,n-1}}\right)\prod_{i=2}^{n-1}\chi_i\left(-\frac{1}{y_{i-1,i-1}}\right)\\ \nonumber
    & \times \psi\left(\frac{1}{y_{1,1}\cdots y_{n-1,n-1}}-\sum_{i=2}^{n}y_{i-1,i-1} -\frac{y_{n-1,n-1}}{a_n}\right).
\end{align}
Let $F'_n=q^{\frac{(n-1)(n-2)}{2}}F_n$, therefore by (3.3) we have
\begin{align}
    \Psi_{\underline{\chi}_n}^{GL_n}(aw_1)=&C'_n \sum_{y_{n-1,n-1}} \chi_n\left(-\frac{a_n}{y_{n-1,n-1}}\right)\psi\left(-\frac{y_{n-1,n-1}}{a_n}\right) \nonumber \\
    & \times F'_{n-1} \sum_{\substack{y_{i,i}\\ 1\leq i\leq n-2}}\chi_1(y_{1,1}\cdots y_{n-1,n-1})\prod_{i=2}^{n-1}\chi_i\left(-\frac{1}{y_{i-1,i-1}}\right)
   \psi\left(\frac{1}{y_{1,1}\cdots y_{n-1,n-1}}-\sum_{i=2}^{n}y_{i-1,i-1} \right).
\end{align}
Consider the inner summation of (3.4). By changing variable $y_{n-2,n-2}\mapsto y_{n-1,n-1}y_{n-2,n-2}$, we have
\begin{align}
    F'_{n-1} \sum_{\substack{y_{i,i}\\ 1\leq i\leq n-2}} &\chi_1(y_{1,1}\cdots y_{n-2,n-2})\chi_{n-1}\left(-\frac{y_{n-1,n-1}}{y_{n-2,n-2}}\right)\prod_{i=2}^{n-2}\chi_i\left(-\frac{1}{y_{i-1,i-1}}\right) \\ \nonumber
   & \times \psi\left(\frac{1}{y_{1,1}\cdots y_{n-2,n-2}}-\sum_{i=2}^{n-1}y_{i-1,i-1}-\frac{y_{n-2,n-2}}{y_{n-1,n-1}} \right).
\end{align}

Clearly we have
\begin{align}
    \Psi_{\underline{\chi}_{2}}^{GL_{2}}\left(\begin{matrix}
    0&a_1 \\ a_2&0
    \end{matrix}\right)=\frac{1}{q+1}\sum_{y_{1,1}\in \IF_{q}^{\times}}\chi_2\left(-\frac{a_1a_2}{y_{1,1}}\right)\psi\left(-\frac{y_{1,1}}{a_2}+\frac{a_{1}}{y_{1,1}}\right)\chi_1(y_{1,1}).
\end{align}

Combining (3.4)-(3.6) we have the following theorem
\begin{te}(Gelfand-Tsetlin formula for maximal parabolics \RNum{1}) Let $n\geq 3$, then for every $a_1, t_1\in \IF_{q}^{\times}$ we have
\begin{align}
    \Psi_{\underline{\chi}_{n}}^{GL_{n}}(g_{n-1,1}(1,a_n))=C'_n\sum_{t_{n-1}\in \IF_{q}^{\times}} \chi_n\left(-\frac{a_n}{t_{n-1}}\right)\psi\left(-\frac{t_{n-1}}{a_n}\right)\Psi_{\underline{\chi}_{n-1}}^{GL_{n-1}}(g_{n-2,1}(1,t_{n-1})),
\end{align}
where $C'_n=q^{n-2}C_n=\frac{q^{n-2}(q-1)}{q^n-1}$.
For the combinatorial interpretation, set $t_{n-1}=y_{n-1,n-1}$, therefore we have the following reduced Gelfand-Tsetlin diagram
$$
\begin{tikzcd}
a_n &y_{n-1,n-1} \arrow[l]& y_{n-2,n-2} \arrow[l,dashed]& \cdots \arrow[l,dashed] &y_{1,1} \arrow[l,dashed]\\
  &&&& 1 \arrow[u,dashed]
\end{tikzcd}
 $$
\end{te}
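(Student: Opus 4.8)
The statement is a rank-lowering recursion, and my plan is to read it off from the single-sum formula (3.3) already obtained for $\Psi_{\underline\chi_n}^{GL_n}(g_{n-1,1}(1,a_n))$. Since $g_{n-1,1}(1,a_n)=aw_1$ with $a=\mathrm{diag}(1,\dots,1,a_n)$, unitarity of the principal series gives $\Psi_{\underline\chi_n}^{GL_n}(aw_1)=\langle \pi_{\underline\chi_n}(w_1)\psi_n,\pi_{\underline\chi_n}(a^{-1})\psi_n\rangle$; expanding this inner product over $U_n$ in the coordinates $\{y_{k,i}\}$ and substituting the explicit values of $\pi_{\underline\chi_n}(w_1)\psi_n$ and $\pi_{\underline\chi_n}(a^{-1})\psi_n$ (read off from the Gauss--Bruhat decomposition of $uw_1$, i.e.\ from the minors $\Delta_i(uw_1)$ and $\Delta_{i,i+1}(uw_1)$), the only entries of $u$ that survive in the summand are the superdiagonal entries $z_{i,i+1}$ and the last-column entries $z_{i,n}=\prod_{k\ge i}y_{k,k}$; the former cancel in the argument of $\psi$ and the latter involve only the diagonal coordinates, so the coordinates $y_{k,i}$ with $k>i$ decouple and summing them over $\IF_q^\times$ contributes the factor $q^{(n-1)(n-2)/2}$. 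This is precisely the reduction to (3.3), a sum over the $n-1$ diagonal variables $y_{1,1},\dots,y_{n-1,n-1}$.

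Starting from (3.3), I would first split off the outermost diagonal variable: set $t_{n-1}:=y_{n-1,n-1}$ and pull $\chi_n(-a_n/t_{n-1})\,\psi(-t_{n-1}/a_n)$ outside the remaining sum (this is (3.4)). Then I would perform the multiplicative substitution $y_{n-2,n-2}\mapsto t_{n-1}\,y_{n-2,n-2}$, a bijection of $\IF_q^\times$ with itself, which removes the residual $t_{n-1}$ from $\chi_1(y_{1,1}\cdots y_{n-1,n-1})$ and from the $\psi$ of the reciprocal product while converting $\chi_{n-1}(-1/y_{n-2,n-2})$ into $\chi_{n-1}(-t_{n-1}/y_{n-2,n-2})$. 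The resulting sum over $y_{1,1},\dots,y_{n-2,n-2}$ (equation (3.5)) then matches, character by character and term by term in the argument of $\psi$, the rank-$(n-1)$ instance of (3.3), i.e.\ it equals $\Psi_{\underline\chi_{n-1}}^{GL_{n-1}}(g_{n-2,1}(1,t_{n-1}))$ with $\underline\chi_{n-1}=(\chi_1,\dots,\chi_{n-1})$; this gives (3.7). For $n=3$ the residual sum is a single sum over $y_{1,1}$, which one checks directly against the explicit $GL_2$ value (3.6) with $a_1=1$ --- this anchors the recursion.

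It remains to pin down the constant. Carrying the normalizations through (3.3)--(3.5), one has $C'_n=F'_n/F'_{n-1}$ with $F'_m=q^{(m-1)(m-2)/2}[G_m:B_m^-]^{-1}$; since $[G_m:B_m^-]=\prod_{i=1}^{m}\frac{q^i-1}{q-1}$, the $q$-exponent contributes $q^{(n-1)(n-2)/2-(n-2)(n-3)/2}=q^{n-2}$ and the ratio of indices contributes $\frac{q-1}{q^n-1}$, so $C'_n=\frac{q^{n-2}(q-1)}{q^n-1}=q^{n-2}C_n$, as claimed. The reduced Gelfand--Tsetlin diagram is then just the picture of (3.7): the single solid arrow $a_n\to y_{n-1,n-1}$ carries the kernel $\chi_n(-a_n/y_{n-1,n-1})\psi(-y_{n-1,n-1}/a_n)$ and the dashed part is the diagram attached to the rank-$(n-1)$ factor $\Psi_{\underline\chi_{n-1}}^{GL_{n-1}}(g_{n-2,1}(1,y_{n-1,n-1}))$.

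The genuine obstacle is not in the recursion itself but in its inputs: one needs the Gauss--Bruhat decomposition of $uw_1$ and hence the precise shapes of $\Delta_i(uw_1)$ and $\Delta_{i,i+1}(uw_1)$, together with a careful verification that exactly the right linear terms cancel so that the auxiliary coordinates genuinely drop out. Granting (3.1)--(3.3), the one delicate point left in proving (3.7) is choosing the rescaling in the second paragraph so that every character and every $\psi$-argument lines up with the rank-$(n-1)$ formula on the nose, which amounts to keeping careful track of which index ranges do or do not include the variable $y_{n-1,n-1}$.
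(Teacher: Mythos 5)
Your argument matches the paper's own proof step for step: expanding the matrix coefficient over $U_n$, decoupling the off-diagonal coordinates, and arriving at the single-sum expression (3.3) in the variables $y_{i,i}$; then splitting off $y_{n-1,n-1}$, rescaling $y_{n-2,n-2}$ by it, recognizing the residual sum as the rank-$(n-1)$ Whittaker value, and computing the constant as $C'_n=F'_n/F'_{n-1}=q^{n-2}(q-1)/(q^n-1)$. One small slip: the decoupled coordinates $y_{k,i}$ with $i<k$ range over all of $\IF_q$, not $\IF_q^\times$; summing over $\IF_q$ is what produces the factor $q^{(n-1)(n-2)/2}$.
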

We also have the following theorem "dual" to the above
\begin{te}(Gelfand-Tsetlin formula for maximal parabolics \RNum{2}) Let $n\geq 3$, then for every $a_1, t_1\in \IF_{q}^{\times}$ we have
\begin{align}
    \Psi_{\underline{\chi}_{n}}^{GL_{n}}(g_{1,n-1}(a_1,1))=C'_n\sum_{t_1\in \IF_{q}^{\times}} \chi_n\left(-\frac{a_1}{t_1}\right)\psi\left(\frac{a_1}{t_1}\right)\Psi_{\underline{\chi}_{n-1}}^{GL_{n-1}}(g_{1,n-2}(t_1,1)),
\end{align}
where $C'_n=q^{n-2}C_n=\frac{q^{n-2}(q-1)}{q^n-1}$.
For the combinatorial interpretation, set $t_1=y_{n-1,1}$, therefore we have the following reduced Gelfand-Tsetlin diagram
$$
\begin{tikzcd}
1 &y_{1,1} \arrow[l,dashed]\\
   &y_{2,1} \arrow[u,dashed]   \\
&\vdots \arrow[u,dashed]\\
 & y_{n-2,1}\arrow[u,dashed] \\
& y_{n-1,1}\arrow[u,dashed] \\
&a_1\arrow[u]
\end{tikzcd}
 $$
\end{te}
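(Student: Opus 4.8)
The plan is to derive Theorem 3.2 from Theorem 3.1 by a duality argument, avoiding a fresh Gauss--Bruhat computation. Two elementary facts do the work. The first is the matrix identity $g_{1,n-1}(a_1,1)^{-1}=g_{n-1,1}(1,a_1^{-1})$, immediate from a block computation ($g_{1,n-1}(a_1,1)\,g_{n-1,1}(1,a_1^{-1})=I_n$). The second is that, since $\pi_{\underline{\chi}_n}(g)^{*}=\pi_{\underline{\chi}_n}(g^{-1})$ for the inner product (1.3) --- the identity already used in the proofs of Propositions 1.1 and 1.2 --- Definition 1.2 gives
$$
\Psi_{\underline{\chi}_n}^{GL_n}(g)=\langle\pi_{\underline{\chi}_n}(g)\psi_n,\psi_n\rangle=\overline{\langle\pi_{\underline{\chi}_n}(g^{-1})\psi_n,\psi_n\rangle}=\overline{\Psi_{\underline{\chi}_n}^{GL_n}(g^{-1})}.
$$
Combining the two yields
$$
\Psi_{\underline{\chi}_n}^{GL_n}\big(g_{1,n-1}(a_1,1)\big)=\overline{\Psi_{\underline{\chi}_n}^{GL_n}\big(g_{n-1,1}(1,a_1^{-1})\big)}.
$$

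Next I would apply Theorem 3.1 to the right-hand side with $a_1^{-1}$ in the role of $a_n$ and complex-conjugate the resulting recursion term by term: $C'_n=\tfrac{q^{n-2}(q-1)}{q^n-1}$ is rational hence fixed; $\overline{\chi_n(x)}=\chi_n(x^{-1})$ since $\chi_n$ is unitary; $\overline{\psi(x)}=\psi(-x)$ since $\psi$ is additive; and the inner $GL_{n-1}$-factor conjugates, via the displayed identity in rank $n-1$ together with $g_{n-2,1}(1,t)^{-1}=g_{1,n-2}(t^{-1},1)$, as $\overline{\Psi_{\underline{\chi}_{n-1}}^{GL_{n-1}}(g_{n-2,1}(1,t_{n-1}))}=\Psi_{\underline{\chi}_{n-1}}^{GL_{n-1}}(g_{1,n-2}(t_{n-1}^{-1},1))$. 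After the change of summation variable $t_1=t_{n-1}^{-1}$ and the remaining elementary simplifications of the $\chi_n$- and $\psi$-arguments, the recursion collapses to
$$
\Psi_{\underline{\chi}_{n}}^{GL_{n}}\big(g_{1,n-1}(a_1,1)\big)=C'_n\sum_{t_1\in\IF_q^\times}\chi_n\!\left(-\frac{a_1}{t_1}\right)\psi\!\left(\frac{a_1}{t_1}\right)\Psi_{\underline{\chi}_{n-1}}^{GL_{n-1}}\big(g_{1,n-2}(t_1,1)\big),
$$
which is (3.8), with the same constant $C'_n$. The base case $n=2$ is (3.6), which is symmetric in its two off-diagonal corners. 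The reduced Gelfand--Tsetlin diagram then follows from (3.8) exactly as in Theorem 3.1: setting $t_1=y_{n-1,1}$, the surviving solid arrow $a_1\to y_{n-1,1}$ is vertical and contributes $\psi(a_1/y_{n-1,1})$, while the dashed arrows record the unresolved recursion.

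Since everything reduces to the three elementary identities above plus Theorem 3.1, I do not expect a real obstacle; the only point needing care is tracking the inversions $x\mapsto x^{-1}$ --- on scalars, on the character $\chi_n$, and on group elements --- so that they compose into the clean statement (3.8), whose indices now run opposite to those of (3.7). A self-contained alternative, parallel to the proof of Theorem 3.1, would be to set $g_{1,n-1}(a_1,1)=a'w_2$ with $a'=\mathrm{diag}(a_1,1,\dots,1)$ and $w_2=w_1^{-1}$, expand $\Psi_{\underline{\chi}_n}^{GL_n}(a'w_2)=\langle\pi_{\underline{\chi}_n}(w_2)\psi_n,\pi_{\underline{\chi}_n}(a'^{-1})\psi_n\rangle$, and compute the two factors via the Gauss--Bruhat/minor expansion of $uw_2$ for a general $u\in U_n$ in the parametrization (3.2); the difference is that the leading minors $\Delta_i(uw_2)$ are ``continuant''-type polynomials in the $y$-coordinates rather than the clean monomials $\prod_{k\geq i}y_{k,k}$ occurring for $uw_1$, so this route, while feasible, is more laborious, and I would keep it only as a remark.
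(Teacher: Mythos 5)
Your proposal is correct and matches the paper's own proof of Theorem 3.2: the paper also starts from the conjugation identity $\Psi_{\underline{\chi}_{n}}^{GL_{n}}(g_{1,n-1}(a_n^{-1},1))=\overline{\Psi_{\underline{\chi}_{n}}^{GL_{n}}(g_{n-1,1}(1,a_n))}$ (your two elementary facts are exactly what underlies equation (3.9)), applies Theorem 3.1, conjugates term by term, and substitutes $a_1=a_n^{-1}$, $t_1=t_{n-1}^{-1}$ to arrive at (3.8). The alternative direct Gauss--Bruhat computation you mention is not pursued in the paper.
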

\proof We have 
\begin{align}
    \Psi_{\underline{\chi}_{n}}^{GL_{n}}(g_{1,n-1}(a_n^{-1},1))&=\overline{\Psi_{\underline{\chi}_{n}}^{GL_{n}}(g_{n-1,1}(1,a_n))} \\
    &= C'_n\sum_{t_{n-1}\in \IF_{q}^{\times}} \chi_n\left(-\frac{t_{n-1}}{a_n}\right)\psi\left(\frac{t_{n-1}}{a_n}\right)\overline{\Psi_{\underline{\chi}_{n-1}}^{GL_{n-1}}(g_{n-2,1}(1,t_{n-1}))}\\
    &= C'_n\sum_{t_{n-1}\in \IF_{q}^{\times}} \chi_n\left(-\frac{t_{n-1}}{a_n}\right)\psi\left(\frac{t_{n-1}}{a_n}\right)\Psi_{\underline{\chi}_{n-1}}^{GL_{n-1}}(g_{1,n-2}(t_{n-1}^{-1},1)).
\end{align}
Setting $a_1=a_{n}^{-1}$ and $t_1=t_{n-1}^{-1}$ in (3.11), we obtain (3.8).

\section{Gelfand-Tsetlin formula for minimal parabolics}

In this section we aim to give explicit formula for  $\Psi_{\underline{\chi}_n}^{GL_n}\left(\begin{smallmatrix}
   0 & &a_1 \\
     & \iddots &\\
a_n    & & 0
\end{smallmatrix}\right)$ in terms of $\Psi_{\underline{\chi}_{n-1}}^{GL_{n-1}}$, i.e., Whittaker function associated with principal series representation of $G_{n-1}$ with parameter $\underline{\chi}_{n-1}=(\chi_1,\cdots,\chi_{n-1})$:

\begin{te}(Gelfand-Tsetlin formula for minimal parabolics)
For all $n\geq 3$ ,let $a=diag(a_1,\cdots,a_n)$, $w_0=\left(\begin{smallmatrix}
   0 & &1 \\
     & \iddots &\\
1    & & 0
\end{smallmatrix}\right)$. Set $\Psi_{\underline{\chi}_{n-1}}^{GL_{n-1}}(aw_0)=\Psi_{\underline{\chi}_{n-1}}^{GL_{n-1}}(a_1,\cdots,a_n)$, we have the following recursion relation:
\begin{align}
    \Psi_{\underline{\chi}_{n}}^{GL_{n}}(a_1,\cdots,a_n)=C_n\sum_{t_1,\cdots, t_{n-1}\in \IF_{q}^{\times}} Q_{GL_{n-1}}^{GL_{n}}(a_1,\cdots,a_n;t_1,\cdots, t_{n-1}|\chi_n)\Psi_{\underline{\chi}_{n-1}}^{GL_{n-1}}(t_1,\cdots, t_{n-1}),
\end{align}
where $C_n=\frac{q-1}{q^n-1}$ and the recursion kernel is given by
\begin{align}
    &Q_{GL_{n-1}}^{GL_{n}}(a_1,\cdots,a_n;t_1,\cdots, t_{n-1}|\chi_n)\nonumber \\
    &=\chi_n\left((-1)^{n-1}\frac{a_1\cdots a_n}{t_1\cdots t_{n-1}}\right)\widetilde{Q}_{GL_{n-1}}^{GL_{n}}(a_1,\cdots,a_n;t_1,\cdots, t_{n-1}|\chi_n)
\end{align}
where
\begin{align}
    \widetilde{Q}_{GL_{n-1}}^{GL_{n}}(a_1,\cdots,a_n;t_1,\cdots, t_{n-1}|\chi_n)=\psi\left(\sum_{k=1}^{n-1}\frac{a_k}{t_{k}}-\sum_{k=1}^{n-1}\frac{t_k}{a_{k+1}}\right).
\end{align}
\end{te}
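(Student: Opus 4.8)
The plan is to follow the same strategy used in Section 3 for the maximal parabolic cases, now applied to the full long element $w_0$. First I would write $aw_0 = a \cdot w_0$ and use the matrix-coefficient description of the Whittaker function, namely
$$
\Psi_{\underline{\chi}_n}^{GL_n}(aw_0)=\langle \pi_{\underline{\chi}_n}(w_0)\psi_n, \pi_{\underline{\chi}_n}(a^{-1})\psi_n\rangle,
$$
expanded as a sum over $u\in U_n$ via equation (1.7). Parametrize $u$ by the coordinates $y_{k,i}$ ($1\le i\le k\le n-1$) as in Section 3, so that the $(i,j)$ entries $z_{i,j}$ of $u$ become explicit polynomials in the $y$'s. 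The factor coming from $\pi_{\underline{\chi}_n}(a^{-1})\psi_n(u)$ is computed directly from the definition of $\psi_n$ and the action on the superdiagonal entries; it contributes $\chi_n(a_n^{-1})\psi$ of a linear combination of the $z_{i,i+1}$.

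The heart of the computation is the factor $\pi_{\underline{\chi}_n}(w_0)\psi_n(u)$. Here one uses the Gauss–Bruhat decomposition of $uw_0$ to express this value through ratios of minors: $\prod_i \chi_i(\Delta_i(uw_0)/\Delta_{i-1}(uw_0))$ times $\psi$ of a sum of ratios $\Delta_{i,i+1}(uw_0)/\Delta_i(uw_0)$, exactly as in the displayed formula preceding (3.3) but now with the anti-diagonal $w_0$ in place of $w_1$. Since $w_0$ reverses all columns, the principal $i\times i$ minor $\Delta_i(uw_0)$ equals (up to sign) the minor of $u$ on rows $1,\dots,i$ and the last $i$ columns; I would introduce the variables $y_{j,k}$ (for the GT diagram) precisely as these trailing-block minors, so that $\Delta_i(uw_0)=\pm\,(\text{product/ratio expression in the }y_{j,k})$. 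This is the step where the Cauchy–Binet identity of the final section is invoked: it identifies the minors of the product $uw_0$ — or rather the recursive relation between rank-$n$ and rank-$(n-1)$ minors — with the kernel $\widetilde Q_{GL_{n-1}}^{GL_n}$ given in (4.4). The bookkeeping of signs (the $(-1)^{n-1}$ and $(-1)^{j-1}$ factors) and the precise change of variables that decouples the rank-$n$ summation into a rank-$(n-1)$ Whittaker function times the kernel is where the main obstacle lies; this is the analogue of the change of variables $y_{n-2,n-2}\mapsto y_{n-1,n-1}y_{n-2,n-2}$ in (3.5), but now one must perform it simultaneously in all the "boundary" variables $t_1,\dots,t_{n-1}$.

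After the change of variables, the summation over the interior $y_{j,k}$ with $k\le j\le n-2$ reassembles, by induction on $n$, into $\Psi_{\underline{\chi}_{n-1}}^{GL_{n-1}}(t_1,\dots,t_{n-1})$, while the remaining variables appear only through $Q_{GL_{n-1}}^{GL_n}$ and an overall constant. Tracking the powers of $q$ from the free (non-contributing) variables $z_{i,i+1}$ — just as the factor $q^{(n-1)(n-2)/2}$ appeared in (3.3) — and combining with $F_n=[G_n:B_n^-]^{-1}$ yields the constant $C_n=(q-1)/(q^n-1)$. The base case $n=3$ reduces to equation (3.6) for $\Psi_{\underline{\chi}_2}^{GL_2}$ together with one layer of the recursion, which can be checked by hand. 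I expect the sign verification and the identification of $\widetilde Q$ with the minor ratios via Cauchy–Binet to be the genuinely delicate part; everything else is parallel to Section 3.
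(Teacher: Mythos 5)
Your proposal follows the same overall route as the paper's proof: expand $\Psi^{GL_n}_{\underline{\chi}_n}(aw_0)$ as a sum over $u\in U_n$, evaluate $\pi(aw_0)\psi_n(u)$ via the Gauss--Bruhat decomposition in the form $\prod_i\chi_i\!\left(\Delta_i/\Delta_{i-1}\right)\psi\!\left(\sum_i\Delta_{i,i+1}/\Delta_i\right)$, feed in the closed-form minor ratios supplied by Proposition 4.1 (which the paper establishes by Cauchy--Binet in Section 5), then change variables in the outer layer $y_{n-1,k}$ to peel off the kernel $\widetilde Q$ and identify the inner sum as $\Psi^{GL_{n-1}}_{\underline{\chi}_{n-1}}(t_1,\dots,t_{n-1})$. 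Whether you fold $a$ into the minors $\Delta_i(uaw_0)$, as the paper does, or factor it out via $\langle\pi(w_0)\psi_n,\pi(a^{-1})\psi_n\rangle$ as you suggest, is only a bookkeeping choice.

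Two details of your sketch are wrong, though, and both would have to be corrected before the computation closes. First, for a general diagonal $a=\mathrm{diag}(a_1,\dots,a_n)$ one has $\pi(a^{-1})\psi_n(u)=\psi_n(ua^{-1})=\underline{\chi}_n(a^{-1})\,\psi(aua^{-1})=\prod_{i=1}^{n}\chi_i(a_i^{-1})\,\psi\!\left(\sum_{i=1}^{n-1}\frac{a_i}{a_{i+1}}z_{i,i+1}\right)$, not just $\chi_n(a_n^{-1})$ times $\psi$ of the $z_{i,i+1}$; you have carried over the specialization $a=\mathrm{diag}(1,\dots,1,a_n)$ from Section 3, and the omitted $\chi_1,\dots,\chi_{n-1}$ factors are precisely what must be absorbed into $\Psi^{GL_{n-1}}_{\underline{\chi}_{n-1}}$ so that only $\chi_n$ survives in the kernel $Q$. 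Second, your explanation of the constant $C_n$ does not apply here: in the minimal-parabolic computation every coordinate $y_{k,i}$, $1\le i\le k\le n-1$, appears nontrivially in the minor formulas (4.4)--(4.5), so there are no free (non-contributing) parameters and no extraneous power of $q$, in contrast to the $q^{(n-1)(n-2)/2}$ of (3.3). Instead $C_n=(q-1)/(q^n-1)=F_n/F_{n-1}=[G_{n-1}:B_{n-1}^-]/[G_n:B_n^-]$ arises simply by extracting the normalization $F_{n-1}$ in front of the inner sum so that it matches the rank-$(n-1)$ analogue of (4.8) verbatim. For the same reason no induction is needed: after the substitution $y'_{n-1,k}=y_{n-1,k}a_{k+1}$ and relabelling, the inner sum is recognized term by term as $\Psi^{GL_{n-1}}_{\underline{\chi}_{n-1}}(t_1,\dots,t_{n-1})$, with no appeal to the recursion at lower rank.
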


\ 

We introduce the following combinatorial interpretation of the recursion formula before proving it. Recall that every element $u\in U_n$ can be parameterized by the following
$$
u=\prod_{k=1}^{n-1}(I_n +\sum_{i=1}^{k}y_{k,i}E_{i,i+1}),
$$
where $E_{i,i+1}$ is the elementary $n\times n$ matrix with 1 at the $(i,i+1)$ entry and zeros otherwise. The recursion relation (4.1) corresponds to the following Gelfand-Tsetlin diagram
$$
\begin{tikzcd}
a_n &y_{n-1,n-1}\arrow[l]  &y_{n-2,n-2}\arrow[l,dashed]&\cdots \arrow[l,dashed] &y_{2,2} \arrow[l,dashed] &y_{1,1} \arrow[l,dashed]\\
  & a_{n-1} \arrow[u]&y_{n-1,n-2}\arrow[u,dashed]  \arrow[l]  & \cdots \arrow[l,dashed] &y_{3,2} \arrow[u,dashed] \arrow[l,dashed]  &y_{2,1} \arrow[u,dashed]  \arrow[l,dashed] \\
   &&\ddots &\ddots &\vdots \arrow[u,dashed]&\vdots \arrow[u,dashed]\\
   && &a_3\arrow[u] &y_{n-1,2} \arrow[u,dashed] \arrow[l] & y_{n-2,1}\arrow[u,dashed] \arrow[l,dashed]\\
   &&&& a_2\arrow[u]& y_{n-1,1}\arrow[u,dashed] \arrow[l]\\
   &&&&&a_1\arrow[u]
\end{tikzcd}
 $$
In the above diagram $x\rightarrow y$ corresponds to $\psi(\pm \frac{x}{y})$, in the formula (4.1), where the "+" sign corresponds to vertical arrows and "-" sign corresponds to horizontal arrows. Clearly the solid arrows correspond to terms in (4.3). Let $t_k=y_{n-1,k}$ for $1\leq k\leq n-1$, then the dashed arrows would be solid if we expand $\Psi_{\underline{\chi}_{n-1}}^{GL_{n-1}}(t_1,\cdots, t_{n-1})$ using the recursion relation repeatedly.

\ 

The following result is the key step of the proof of Theorem 4.1.

\begin{prop}
For every $1\leq i\leq n-1$, where $n\geq 2$, we have the following formulas regarding $\Delta_i=\Delta_i(uaw_0)$ and $\Delta_{i,i+1}=\Delta_{i,i+1}(uaw_0)$:
\begin{align}
    \Delta_i=(-1)^{\frac{i(i-1)}{2}}\prod_{k=i}^{n-1}\prod_{j=k-i+1}^{k}y_{k,j}\prod_{k=1}^{i}a_{n-k+1},
\end{align}
and
\begin{align}
    \frac{\Delta_{i,i+1}}{\Delta_i}=\frac{a_{n-i}}{a_{n-i+1}}\sum_{k=i}^{n-1}\prod_{m=k}^{n-1}\frac{y_{m+1,m-i+1}}{y_{m,m-i+1}},
\end{align}
where $y_{n,k}=1$ is assumed for any $1\leq k\leq n-1$.
\end{prop}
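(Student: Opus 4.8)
The plan is to reduce (4.6)--(4.7) to two identities for minors of $u$ alone, and then to evaluate those minors by iterating the Cauchy--Binet formula along the factorization $u=A_1\cdots A_{n-1}$, $A_k:=I_n+\sum_{\ell=1}^{k}y_{k,\ell}E_{\ell,\ell+1}$ (each $A_k$ being bidiagonal, this is exactly the Lindström--Gessel--Viennot set-up and is characteristic-free). First I would note that $aw_0$ has, in column $j$, the single nonzero entry $a_{n+1-j}$ in row $n+1-j$; hence $M:=uaw_0$ satisfies $M_{r,j}=a_{n+1-j}\,u_{r,n+1-j}$. Therefore $\Delta_i=\Delta_i(M)$ equals $\big(\prod_{j=1}^{i}a_{n+1-j}\big)$ times the determinant of the $i\times i$ matrix with rows $1,\dots,i$ and columns $n,n-1,\dots,n-i+1$ of $u$; reversing the column order (a permutation of sign $(-1)^{i(i-1)/2}$) turns this into the minor $\mu_i$ of $u$ on rows $1,\dots,i$ and columns $n-i+1,\dots,n$, so
\[
\Delta_i=(-1)^{\frac{i(i-1)}{2}}\Big(\prod_{k=1}^{i}a_{n-k+1}\Big)\,\mu_i .
\]
The same computation with column $i$ of $M[1{:}i,1{:}i]$ replaced by column $i+1$ of $M$ (the relevant $u$-columns then being $n,n-1,\dots,n-i+2,n-i$, whose increasing rearrangement is again the reversal) gives $\Delta_{i,i+1}=(-1)^{\frac{i(i-1)}{2}}\big(\prod_{j=1}^{i-1}a_{n+1-j}\big)a_{n-i}\,\nu_i$, where $\nu_i$ is the minor of $u$ on rows $1,\dots,i$ and columns $\{n-i\}\cup\{n-i+2,\dots,n\}$. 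Dividing, $\Delta_{i,i+1}/\Delta_i=\frac{a_{n-i}}{a_{n-i+1}}\cdot\frac{\nu_i}{\mu_i}$, so it remains to prove
\[
\mu_i=\prod_{k=i}^{n-1}\prod_{j=k-i+1}^{k}y_{k,j},\qquad \frac{\nu_i}{\mu_i}=\sum_{k=i}^{n-1}\ \prod_{m=k}^{n-1}\frac{y_{m+1,m-i+1}}{y_{m,m-i+1}} ,
\]
the convention $y_{n,\bullet}=1$ being what makes the product with top index $m=n-1$ meaningful.

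Next I would evaluate $\mu_i$ and $\nu_i$. Reading $u=A_1\cdots A_{n-1}$ as the transfer matrix of the planar network with $n$ levels and $n-1$ layers, in which in layer $k$ a path at level $\ell$ either goes straight or descends to level $\ell+1$ with weight $y_{k,\ell}$ (the descent existing only for $\ell\le k$), iterated Cauchy--Binet (equivalently Lindström's lemma) expresses $\mu_i$ and $\nu_i$ as the generating functions of vertex-disjoint families of monotone (descending) paths from the sources $1,\dots,i$ to the respective sinks, with overall sign $+1$ because in such a family the $s$-th source is necessarily matched to the $s$-th sink. A path from level $a$ to level $b$ makes exactly $b-a$ descents, its $m$-th descent leaving level $a+m-1$ and lying in a strictly increasing layer $\ge a+m-1$; this forces the path out of source $i$ to be the staircase through layers $i,\dots,n-1$ (weight $\prod_{k=i}^{n-1}y_{k,k}$), and then, inductively from the top sink downward, each remaining source's path to be the correspondingly shifted staircase, the path out of source $s$ ($2\le s\le i$) carrying weight $\prod_{k=i}^{n-1}y_{k,k-i+s}$. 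Multiplying over $s=1,\dots,i$ and writing $\prod_{s=1}^{i}y_{k,k-i+s}=\prod_{j=k-i+1}^{k}y_{k,j}$ gives $\mu_i$. For $\nu_i$ the sinks are $n-i,n-i+2,\dots,n$: the paths out of sources $i,i-1,\dots,2$ are forced exactly as before, while the path out of source $1$ must reach level $n-i$ using only $n-i-1$ descents, hence it is stuck on level $1$ through layer $i-1$ and then descends in all but one of the layers $i,\dots,n-1$; for each choice $\kappa\in\{i,\dots,n-1\}$ of the resting layer the resulting family stays strictly above the source-$2$ staircase, and the extra path contributes the weight $\prod_{k=i}^{\kappa-1}y_{k,k-i+1}\prod_{k=\kappa+1}^{n-1}y_{k,k-i}$. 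Hence $\nu_i=\big(\prod_{s=2}^{i}\prod_{k=i}^{n-1}y_{k,k-i+s}\big)\sum_{\kappa=i}^{n-1}\prod_{k=i}^{\kappa-1}y_{k,k-i+1}\prod_{k=\kappa+1}^{n-1}y_{k,k-i}$, and dividing by $\mu_i$ cancels the common factor $\prod_{s=2}^{i}\prod_{k=i}^{n-1}y_{k,k-i+s}\cdot\prod_{k=i}^{n-1}y_{k,k-i+1}$ and leaves $\sum_{\kappa=i}^{n-1}y_{\kappa,\kappa-i+1}^{-1}\prod_{k=\kappa+1}^{n-1}\frac{y_{k,k-i}}{y_{k,k-i+1}}$, which, after rewriting $\prod_{m=\kappa}^{n-1}y_{m+1,m-i+1}=\prod_{m=\kappa+1}^{n-1}y_{m,m-i}$ (using $y_{n,n-i}=1$), is exactly $\sum_{k=i}^{n-1}\prod_{m=k}^{n-1}\frac{y_{m+1,m-i+1}}{y_{m,m-i+1}}$. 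Combined with the reductions above this yields (4.6)--(4.7).

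The routine parts are the determinantal reduction and the forced-staircase bookkeeping; the main obstacle is the analysis of $\nu_i$, where one must verify that the $n-i$ resting-layer choices for the source-$1$ path really do exhaust the admissible vertex-disjoint families (no other monotone path out of source $1$ can stay below the source-$2$ staircase, and each choice is admissible), and then carry out the re-indexing that collapses the $\kappa$-sum into the claimed closed form. I would record the extreme cases $i=1$ (where $\mu_1=u_{1,n}=\prod_{k=1}^{n-1}y_{k,k}$ while $\nu_1=u_{1,n-1}$ is a genuine sum of $n-1$ monomials) and $i=n-1$ as checks on signs and on the $y_{n,\bullet}=1$ convention.
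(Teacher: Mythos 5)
Your argument is correct, but it takes a genuinely different route from the paper. Both proofs ultimately rest on the Cauchy--Binet formula, but they deploy it very differently. The paper (Section 5) first reduces Proposition~4.1 to the $a$-free Proposition~5.1, then writes $u^{(n)}w_0^{(n)}=\bigl(u^{(n-1)}w_0^{(n-1)}\bigr)\cdot\bigl((w_0^{(n-1)})^{-1}\fX^n_{n-1}w_0^{(n)}\bigr)$ and applies Cauchy--Binet \emph{once} to this two-factor product to obtain a one-step recursion in $n$ for $\Delta_i$ (Theorem~5.1) and a two-term recursion for $\Delta_{i,i+1}/\Delta_i$ (Theorem~5.2); formulas (4.6)--(4.7) then follow by unwinding the recursion. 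You instead pull the $a$- and $w_0$-factors out directly (which is a cleaner bookkeeping than the paper's), reduce to two minors $\mu_i,\nu_i$ of $u$ itself, and then iterate Cauchy--Binet along the full Jacobi factorization $u=A_1\cdots A_{n-1}$ --- i.e.\ you invoke Lindstr\"om's lemma for the associated planar network --- and show that the vertex-disjoint path families are essentially forced (completely forced for $\mu_i$, forced up to the one free ``resting layer'' $\kappa$ for $\nu_i$). The two arguments trade off in an interesting way: the paper's is shorter and stays within linear algebra, never needing the combinatorics of non-intersecting paths, while yours makes the Gelfand--Tsetlin/lattice-path structure of the answer completely transparent and gives the closed form in one pass without a recursion. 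Two small points worth tightening if you write this up: (i) you say ``stays strictly above the source-$2$ staircase'' where the paths of smaller index lie at \emph{lower} levels, so ``strictly below'' is meant; (ii) the claim that the paths out of sources $i,i-1,\dots,2$ are ``forced exactly as before'' deserves the same one-line induction you implicitly use for $\mu_i$ (the source-$s$ path cannot descend before layer $i$ because it must stay strictly below the already-forced source-$(s+1)$ staircase, and $n-i$ descents in layers $i,\dots,n-1$ leaves no freedom). With those points made explicit, the argument is complete and matches the paper's formulas, including the sign $(-1)^{i(i-1)/2}$ and the boundary convention $y_{n,k}=1$.
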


We will prove this proposition in the last section. Indeed Theorem 4.1 easily follows from the above proposition.
\begin{proof}(Theorem 4.1)
We have
\begin{align}
    \pi_{\underline{\chi}_n}(aw_0)\psi_n(u)&=\prod_{i=1}^{n}\chi_i\left(\frac{\Delta_i}{\Delta_{i-1}}\right)\psi\left(\sum_{i=1}^{n-1}\frac{\Delta_{i,i+1}}{\Delta_i}\right) \\
    &=\chi_1(a_n\prod_{k=i}^{n-1}y_{k,k})\prod_{i=2}^{n}\chi_i\left((-1)^{i-1}\frac{\prod_{k=i}^{n-1}y_{k,k-i+1}a_{n-i+1}}{\prod_{j=1}^{i-1}y_{i-1,j}}\right)\nonumber \\
    &\times \psi\left(\sum_{i=1}^{n-1}\frac{a_{n-i}}{a_{n-i+1}}\sum_{k=i}^{n-1}\prod_{m=k}^{n-1}\frac{y_{m+1,m-i+1}}{y_{m,m-i+1}}\right)
\end{align}
Therefore 
\begin{align}
    \Psi_{\underline{\chi}_{n}}^{GL_{n}}(a_1,\cdots,a_n)=&F_n\sum_{\substack{y_{k,i}\in \mathbb{F}_{q}^{\times} \\ 1\leq k\leq n-1\\ 1\leq i\leq k}}\chi_1(a_n\prod_{k=i}^{n-1}y_{k,k})\prod_{i=2}^{n}\chi_i\left((-1)^{i-1}\frac{\prod_{k=i}^{n-1}y_{k,k-i+1}a_{n-i+1}}{\prod_{j=1}^{i-1}y_{i-1,j}}\right)\nonumber \\
    &\times \psi\left(\sum_{i=1}^{n-1}\frac{a_{n-i}}{a_{n-i+1}}\sum_{k=i}^{n-1}\prod_{m=k}^{n-1}\frac{y_{m+1,m-i+1}}{y_{m,m-i+1}}\right)\psi\left(-\sum_{k=1}^{n-1}\sum_{i=1}^{k}y_{k,i}\right)
\end{align}
Let $y'_{n-1,k}= y_{n-1,k}a_{k+1}$, therefore 
\begin{align}
    \Psi_{\underline{\chi}_{n}}^{GL_{n}}(a_1,\cdots,a_n)=&C_n\sum_{\substack{y'_{n-1,k}\in \mathbb{F}_{q}^{\times} \\ 1\leq k\leq n-1}}\chi_n\left((-1)^{n-1}\frac{\prod_{i=1}^{n}a_n}{\prod_{k=1}^{n-1}y'_{n-1,k}}\right)\psi\left(-\sum_{i=1}^{n-1}\frac{y'_{n-1,i}}{a_{i+1}}+\sum_{i=1}^{n-1}\frac{a_{n-i}}{y'_{n-1,n-i}}\right)\nonumber \\
    &  \times F_{n-1}\sum_{\substack{y_{k,i}\in \mathbb{F}_{q}^{\times} \\ 1\leq k\leq n-2\\ 1\leq i\leq k}}\chi_1(y'_{n-1,n-1}\prod_{k=i}^{n-2}y_{k,k})\prod_{i=2}^{n-1}\chi_i\left((-1)^{i-1}\frac{\prod_{k=i}^{n-2}y_{k,k-i+1}y'_{n,n-i}}{\prod_{j=1}^{i-1}y_{i-1,j}}\right)\nonumber \\
    &\times \psi\left(\sum_{i=1}^{n-1}\frac{y'_{n-1,n-i-1}}{y'_{n-1,n-i}}\sum_{k=i}^{n-2}\prod_{m=k}^{n-3}\frac{y_{m+1,m-i+1}}{y_{m,m-i+1}}\frac{1}{y_{n-2,n-i+1}}\right)\psi\left(-\sum_{k=1}^{n-2}\sum_{i=1}^{k}y_{k,i}\right)
\end{align}
In (4.9), let $y'_{n-1,k}=t_k$ for every $1\leq k\leq n-1$,  $y''_{k,i}=y_{k,i}$ for every $1\leq k\leq n-2$ and $1\leq i\leq k$, and $y''_{n-1,i}=1$ for every $1\leq i\leq n-1$, hence we obtain
\begin{align}
    \Psi_{\underline{\chi}_{n}}^{GL_{n}}(a_1,\cdots,a_n)=&C_n\sum_{\substack{t_{k}\in \mathbb{F}_{q}^{\times} \\ 1\leq k\leq n-1}}\chi_n\left((-1)^{n-1}\frac{\prod_{k=1}^{n}a_k}{\prod_{k=1}^{n-1}t_{k}}\right)\psi\left(-\sum_{k=1}^{n-1}\frac{t_{k}}{a_{k+1}}+\sum_{k=1}^{n-1}\frac{a_{k}}{t_{k}}\right) \nonumber \\
    &  \times F_{n-1}\sum_{\substack{y''_{k,i}\in \mathbb{F}_{q}^{\times} \\ 1\leq k\leq n-2\\ 1\leq i\leq k}}\chi_1(t_{n-1}\prod_{k=i}^{n-2}y''_{k,k})\prod_{i=2}^{n-1}\chi_i\left((-1)^{i-1}\frac{\prod_{k=i}^{n-2}y''_{k,k-i+1}t_{n-i}}{\prod_{j=1}^{i-1}y''_{i-1,j}}\right) \nonumber \\
    &\times \psi\left(\sum_{i=1}^{n-2}\frac{t_{n-i}}{t_{n-i+1}}\sum_{k=i}^{n-2}\prod_{m=k}^{n-2}\frac{y''_{m+1,m-i+1}}{y''_{m,m-i+1}}\right)\psi\left(-\sum_{k=1}^{n-2}\sum_{i=1}^{k}y''_{k,i}\right)
\end{align}
Note that the inner summation over $y''_{k,i}$ is exactly $\Psi_{\underline{\chi}_{n-1}}^{GL_{n-1}}(t_1,\cdots,t_{n-1})$.
Hence the recursion formula (4.1) is proved.
\end{proof}

\

\section{Proof of Proposition 4.1}

In Proposition 4.1, since $a$ is a diagonal matrix, therefore the $i$-th column of $ua$ equals to the $i$-th column of $u$ multiplied by $a_i$. Since right multiplication by $w_0$ simply permutes the column of $ua$, thus we deduce that it suffices to prove the following proposition.
\begin{prop}
Following the notations adopted in Proposition 4.1, for any given $n\geq 2$, let $u^{(n)}=\prod_{k=1}^{n-1}(I_n +\sum_{i=1}^{k}y_{k,i}E_{i,i+1})$, and $w_0^{(n)}=\left(\begin{smallmatrix}
   0 & &1 \\
     & \iddots &\\
1    & & 0
\end{smallmatrix}\right)$ we have the following formulas of minors:
\begin{align}
    \Delta_i(u^{(n)}w_0^{(n)})=(-1)^{\frac{i(i-1)}{2}}\prod_{k=i}^{n-1}\prod_{j=k-i+1}^{k}y_{k,j},
\end{align}
and
\begin{align}
    \frac{\Delta_{i,i+1}(u^{(n)}w_0^{(n)})}{\Delta_i(u^{(n)}w_0^{(n)})}=\sum_{k=i}^{n-1}\prod_{m=k}^{n-1}\frac{y_{m+1,m-i+1}}{y_{m,m-i+1}},
\end{align}
where $y_{n,k}=1$ is assumed for any $1\leq k\leq n-1$.
\end{prop}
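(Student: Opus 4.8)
The plan is to prove Proposition 5.1 by induction on $n$, exploiting the explicit block structure of $M := u^{(n)} w_0^{(n)}$. Since $w_0^{(n)}$ is the antidiagonal permutation matrix, right-multiplication by it reverses the order of columns, so the $j$-th column of $M$ is the $(n+1-j)$-th column of $u^{(n)}$. In particular the principal $i\times i$ minor $\Delta_i(M)$ uses rows $1,\dots,i$ and columns $1,\dots,i$ of $M$, i.e. the \emph{last} $i$ columns of $u^{(n)}$ (columns $n-i+1,\dots,n$), read in reverse order, restricted to the first $i$ rows. The first step is therefore to record, from the parametrization $u^{(n)}=\prod_{k=1}^{n-1}(I_n+\sum_{i=1}^k y_{k,i}E_{i,i+1})$, an explicit formula (or at least the vanishing pattern) for the entry $(u^{(n)})_{a,b}$ with $a\le b$: it is a sum over "paths" $a = j_0 < j_1 < \cdots < j_r = b$ of products $\prod y_{\bullet,\bullet}$, but for the submatrix we actually need only the top rows against the rightmost columns, which drastically limits which entries are nonzero.

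\textbf{Key steps.} First I would establish the base case $n=2$ (and perhaps $n=3$) by direct computation to fix conventions, checking both \eqref{} displays — here $\Delta_1(u^{(2)}w_0^{(2)})$ is just the $(1,2)$-entry $y_{1,1}$, matching $\prod_{k=1}^{1}\prod_{j=1}^{1}y_{k,j}=y_{1,1}$. Second, for the inductive step I would exhibit $u^{(n)}w_0^{(n)}$ in block form adapted to deleting the first row and the last column (equivalently the last column of $w_0^{(n)}$, which is $e_1$): the key observation is that column $n$ of $u^{(n)}$ is $e_1 + y_{n-1,n-1}e_n$-type — more precisely its only nonzero entries among rows $1,\dots,n-1$ sit in a predictable spot — so expanding $\Delta_i(M)$ along that column reduces it to a minor of the analogous matrix $u^{(n-1)}w_0^{(n-1)}$ with a single factor $y_{n-1,\,\ast}$ pulled out; iterating the sign bookkeeping $(-1)^{i(i-1)/2}$ comes out of the column reversal. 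Third, for the ratio \eqref{}, recall $\Delta_{i,i+1}$ is obtained from $\Delta_i$ by replacing column $i$ of $M$ (inside the $i\times i$ block) by column $i+1$; after substituting the product formula for $\Delta_i$, the ratio should telescope into the claimed sum $\sum_{k=i}^{n-1}\prod_{m=k}^{n-1}\frac{y_{m+1,m-i+1}}{y_{m,m-i+1}}$. I expect this is cleanest via the same column-expansion recursion: write $\Delta_{i,i+1}(u^{(n)}w_0^{(n)})$ as a sum of two terms according to expanding the swapped column, one giving $y_{n,\ast}\Delta_{i,i+1}(u^{(n-1)}w_0^{(n-1)})$ (with $y_{n,\ast}=1$) and one giving an extra summand matching the $k=i$ term. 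Then Proposition 4.1 follows from Proposition 5.1 by the remark preceding it: inserting the diagonal $a$ multiplies column $j$ of $u^{(n)}$ by $a_j$, hence column $i$ of $M=u a w_0$ by $a_{n+1-i}$, which contributes $\prod_{k=1}^i a_{n-k+1}$ to $\Delta_i$ and the factor $a_{n-i}/a_{n-i+1}$ to the ratio.

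\textbf{Main obstacle.} The genuine difficulty is not the induction skeleton but keeping the \emph{indexing of the $y_{k,i}$ under column reversal} straight: the map sending "column $b$ of $u^{(n)}$" to "column $i$ of the principal submatrix of $M$" is $b = n+1-i$, and simultaneously the induction drops $n\mapsto n-1$ and deletes the top row, so one must verify carefully that the surviving variables $y_{k,j}$ with $i\le k\le n-1$, $k-i+1\le j\le k$ in \eqref{} are exactly those that appear, and that the shifted ranges in \eqref{} (the index $m-i+1$) are respected. A secondary nuisance is the sign: one should check $(-1)^{i(i-1)/2}$ against the permutation that reverses $i$ columns, whose sign is $(-1)^{\lfloor i/2 \rfloor} = (-1)^{i(i-1)/2}$, so this is consistent, but it must be threaded through the Laplace expansion at each inductive step. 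I would therefore devote most of the written proof to a clean statement of which $(u^{(n)})_{a,b}$ are nonzero and to the single column-expansion identity, after which both formulas drop out by a short induction.
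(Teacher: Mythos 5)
Your inductive skeleton is reasonable, but the central claim on which the induction step rests is false, and without it the plan does not go through. You write that ``column $n$ of $u^{(n)}$ is $e_1 + y_{n-1,n-1}e_n$-type --- more precisely its only nonzero entries among rows $1,\dots,n-1$ sit in a predictable spot.'' In fact the parametrization $u^{(n)}=\prod_{k=1}^{n-1}(I_n+\sum_{i=1}^k y_{k,i}E_{i,i+1})$ gives (as the paper already records in Section 3) $(u^{(n)})_{a,n}=\prod_{k=a}^{n-1}y_{k,k}$ for every $a\le n$, so \emph{all} $n$ entries of the last column are generically nonzero. For $n=3$ the last column of $u^{(3)}$ is $(y_{1,1}y_{2,2},\,y_{2,2},\,1)^{T}$, not a two-entry vector. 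Consequently a Laplace expansion of $\Delta_i(u^{(n)}w_0^{(n)})$ along the first column of the principal $i\times i$ block (which is exactly column $n$ of $u^{(n)}$, restricted to rows $1,\dots,i$) produces $i$ nonzero terms, not a single term with a factor $y_{n-1,\ast}$ pulled out; the clean one-step reduction you propose does not exist in this form. The same problem infects your proposed treatment of $\Delta_{i,i+1}$.

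The paper gets around exactly this obstacle by not expanding along a single column at all. It factors
$u^{(n)}w_0^{(n)}=A\,B$ with $A=u^{(n-1)}w_0^{(n-1)}$ and $B=(w_0^{(n-1)})^{-1}(I_n+\sum_{i=1}^{n-1}y_{n-1,i}E_{i,i+1})\,w_0^{(n)}$, where $B$ is a bidiagonal-plus-corner matrix with diagonal entries $y_{n-1,n-i}$, $1$'s on the superdiagonal, and a single $1$ at position $(n,1)$. Applying Cauchy--Binet to $\Delta_i(AB)=\sum_K |A_{IK}||B_{KI}|$ and using the anti-triangular shape of $A$ together with the near-diagonal shape of $B$, one sees that only $K=I$ (resp.\ $K\in\{I,I'\}$ for $\Delta_{i,i+1}$) contributes, yielding the recursions $\Delta_i(u^{(n)}w_0^{(n)})=\prod_{k=1}^{i}y_{n-1,n-k}\,\Delta_i(u^{(n-1)}w_0^{(n-1)})$ and the analogous two-term recursion for $\Delta_{i,i+1}/\Delta_i$. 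This is the idea that replaces your ``single sparse column'': the messy, fully-populated column $n$ of $u^{(n)}$ is absorbed into the factor $A$, which is precisely the previous-rank matrix, while all of the new variables $y_{n-1,\ast}$ are isolated in the extremely sparse factor $B$. If you want to salvage a column-expansion style argument, you would first have to perform this factorization (or an equivalent column reduction by $B^{-1}$) before expanding; as stated, your step two is based on a sparsity pattern that $u^{(n)}$ simply does not have.

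Two smaller points: your base case and your verification that $(-1)^{\lfloor i/2\rfloor}=(-1)^{i(i-1)/2}$ are correct, and your final remark deriving Proposition 4.1 from Proposition 5.1 by tracking how the diagonal $a$ rescales columns is exactly the reduction the paper makes in the sentence preceding Proposition 5.1. Those parts are fine.
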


\ 

Let $w_0^{(n-1)}=\left(\begin{array}{c|ccc}
  \begin{smallmatrix}
   0 & &1 \\
     & \iddots &\\
1    & & 0
\end{smallmatrix}
  & \begin{smallmatrix}
  0 \\ \vdots \\ 0
  \end{smallmatrix} \\
\hline 
  \begin{smallmatrix}
  0&  \cdots& 0
  \end{smallmatrix} &
  \begin{smallmatrix}
  1
  \end{smallmatrix}
\end{array}\right)$
, where the first block is a $(n-1)\times(n-1)$ matrix with 1s on the anti-diagonals and 0 elsewhere, and $u^{(n-1)}=\prod_{k=1}^{n-2}(I_n +\sum_{i=1}^{k}y_{k,i}E_{i,i+1})$. Note that here both $w_0^{(n-1)}$ and $u^{(n-1)}$ are $n\times n$ matrices, however essentially we are working with their first $(n-1)\times (n-1)$ blocks. In the following two theorems we have found recursive relations for the minors of $u^{(n)}w_0^{(n)}$ in terms of $n$, and Proposition 5.1 follows by resolving this recursive relation.

\begin{te}
\begin{align}
    \Delta_i(u^{(n)}w_0^{(n)})=\prod_{k=1}^{i}y_{n-1,n-k}\Delta_i(u^{(n-1)}w_0^{(n-1)}).
\end{align}
\end{te}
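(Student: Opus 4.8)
The plan is to peel off the last elementary factor in the product defining $u^{(n)}$ and track its effect on the principal minors. Write $v = I_n + \sum_{i=1}^{n-1}y_{n-1,i}E_{i,i+1}$ for this last factor and $u^{(n-1)} = \prod_{k=1}^{n-2}(I_n+\sum_{i=1}^{k} y_{k,i}E_{i,i+1})$ for the product of the remaining ones, so that $u^{(n)}=u^{(n-1)}v$ and therefore $u^{(n)}w_0^{(n)}=u^{(n-1)}\bigl(v\,w_0^{(n)}\bigr)$. Since $u^{(n-1)}$ involves only $E_{i,i+1}$ with $i\le n-2$, as an $n\times n$ matrix it is block diagonal with last row and last column equal to those of $I_n$; in particular its $n$-th column is $e_n$.

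First I would compute $v\,w_0^{(n)}$ column by column: the $\ell$-th column of $v$ is $e_\ell+y_{n-1,\ell-1}e_{\ell-1}$ (with $y_{n-1,0}:=0$), and right multiplication by $w_0^{(n)}$ reverses the order of the columns, so the $j$-th column of $v\,w_0^{(n)}$ is $e_{n+1-j}+y_{n-1,n-j}e_{n-j}$. Applying $u^{(n-1)}$, the $j$-th column of $u^{(n)}w_0^{(n)}$ equals $C_{n+1-j}+y_{n-1,n-j}C_{n-j}$, where $C_\ell$ is the $\ell$-th column of $u^{(n-1)}$. Now I restrict to the top-left $i\times i$ submatrix, for $1\le i\le n-1$, writing $M_\ell\in\IF_q^{\,i}$ for the truncation of $C_\ell$ to its first $i$ entries. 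Since $i<n$ the term $C_n=e_n$ truncates to $0$, so the $j$-th column of this submatrix is $y_{n-1,n-1}M_{n-1}$ when $j=1$ and $M_{n-j+1}+y_{n-1,n-j}M_{n-j}$ when $2\le j\le i$; here $1\le n-j<n-j+1\le n-1$, so every $M_\ell$ occurring is a genuine truncated column of $u^{(n-1)}$.

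Consequently this $i\times i$ submatrix factors as $NA$, with $N=[\,M_{n-1}\mid M_{n-2}\mid\cdots\mid M_{n-i}\,]$ and $A$ the $i\times i$ upper bidiagonal matrix having $A_{j,j}=y_{n-1,n-j}$ on the diagonal and $A_{j-1,j}=1$ on the superdiagonal. Taking determinants and using that $A$ is triangular,
\begin{align*}
\Delta_i\bigl(u^{(n)}w_0^{(n)}\bigr)=\det(N)\,\det(A)=\Bigl(\prod_{k=1}^{i}y_{n-1,n-k}\Bigr)\det(N).
\end{align*}
Finally I would identify $\det(N)$: since $w_0^{(n-1)}$ reverses the first $n-1$ columns and fixes the last, the first $i$ columns of $u^{(n-1)}w_0^{(n-1)}$ are columns $n-1,n-2,\dots,n-i$ of $u^{(n-1)}$, so the top-left $i\times i$ submatrix of $u^{(n-1)}w_0^{(n-1)}$ is exactly $N$ and hence $\det(N)=\Delta_i(u^{(n-1)}w_0^{(n-1)})$. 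Combining the two displays gives the claimed recursion. The main point requiring care is the index bookkeeping in the middle step, especially the boundary column $j=1$: for $i\le n-1$ its $e_n$-contribution truncates away, which is exactly what makes $N$ an honest top-left block of $u^{(n-1)}w_0^{(n-1)}$ and so closes the recursion, and it is also the reason the statement is implicitly restricted to $i\le n-1$.
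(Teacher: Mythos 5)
Your proof is correct, and it takes a genuinely more elementary route than the paper. The paper factors the full $n\times n$ product as $A\cdot B$ with $A=u^{(n-1)}w_0^{(n-1)}$ and $B=(w_0^{(n-1)})^{-1}\fX_{n-1}^{n}w_0^{(n)}$, then invokes the Cauchy--Binet formula for $\Delta_i(AB)=\sum_K |A_{IK}||B_{KI}|$ and rules out all but the $K=I$ term by inspecting the zero patterns of $A$ and $B$. You instead peel off the last elementary factor as $u^{(n)}w_0^{(n)}=u^{(n-1)}(vw_0^{(n)})$, track columns directly, and show that the \emph{top-left $i\times i$ block itself} factors as a product $N\widetilde{A}$ of two honest $i\times i$ matrices --- $N$ being the top-left block of $u^{(n-1)}w_0^{(n-1)}$ and $\widetilde{A}$ upper bidiagonal with the $y_{n-1,n-k}$ on its diagonal. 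Since the block already factors as a square-by-square product, taking determinants requires no minor-summation formula at all. What the paper's Cauchy--Binet framework buys is uniformity: it handles Theorem 5.1 and the mixed-minor Theorem 5.2 with the same machinery (there the nontrivial cancellation of extra $K$-terms becomes visible). What your argument buys is transparency for this particular statement --- the product $\prod_{k=1}^i y_{n-1,n-k}$ is manifestly $\det\widetilde{A}$, and the key boundary observation (that the $e_n$-contribution in the first column vanishes upon truncation to $i\le n-1$ rows, which is exactly what lets $N$ be a clean square block of $u^{(n-1)}w_0^{(n-1)}$) is isolated and explicit. Both proofs work over an arbitrary field.
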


\begin{te} For every $1\leq i\leq n-1$ we have
\begin{align}
    \frac{\Delta_{i,i+1}(u^{(n)}w_0^{(n)})}{\Delta_i(u^{(n)}w_0^{(n)})}=\frac{1}{y_{n-1,n-i}}+\frac{y_{n-1,n-i-1}}{y_{n-1,n-i}}\frac{\Delta_{i,i+1}(u^{(n-1)}w_0^{(n-1)})}{\Delta_i(u^{(n-1)}w_0^{(n-1)})},
\end{align}
where $y_{n-1,0}=0$ is assumed.
\end{te}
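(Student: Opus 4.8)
The plan is to reduce the rank-$n$ minors to rank-$(n-1)$ ones by factoring out the last elementary layer of the parametrization of $U_n$. Write $v = I_n + \sum_{i=1}^{n-1} y_{n-1,i}E_{i,i+1}$ for that last factor, so that $u^{(n)} = u^{(n-1)}v$ and therefore $u^{(n)}w_0^{(n)} = u^{(n-1)}\,(v\,w_0^{(n)})$. Let $c_j$ denote the $j$-th column of $u^{(n-1)}$ (with $c_0 := 0$ and the convention $y_{n-1,0}=0$). Two elementary facts do all the work: first, $c_n = e_n$, since none of the elementary factors building $u^{(n-1)}$ meets the last column; second, as $v$ is upper bidiagonal the $j$-th column of $u^{(n)}$ is $c_j + y_{n-1,j-1}c_{j-1}$, so after the column reversal by $w_0^{(n)}$ the $c$-th column of $M := u^{(n)}w_0^{(n)}$ equals $c_{n+1-c} + y_{n-1,n-c}\,c_{n-c}$.

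I would then evaluate both $\Delta_i(M)$ and $\Delta_{i,i+1}(M)$ by expanding the determinant multilinearly in its columns (equivalently, by the Cauchy--Binet formula applied to $M = u^{(n-1)}\cdot(vw_0^{(n)})$, which is valid over an arbitrary field) and discarding every term in which a column index is repeated. Abbreviate $\hat c_j$ for $c_j$ restricted to rows $1,\dots,i$, so that $\hat c_n = 0$. In $\Delta_i(M)$, column $1$ can only contribute $y_{n-1,n-1}\hat c_{n-1}$, and then a one-line induction shows that avoiding a repeated index forces each later column onto its ``$y$-branch'', producing the vectors $\hat c_{n-1},\hat c_{n-2},\dots,\hat c_{n-i}$ with total coefficient $\prod_{k=1}^{i} y_{n-1,n-k}$; since $\hat c_{n-1},\dots,\hat c_{n-i}$ are precisely columns $1,\dots,i$ of $u^{(n-1)}w_0^{(n-1)}$ restricted to the first $i$ rows, this recovers Theorem 5.1 with no stray sign. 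For the denominator in the claimed recursion I may in any case simply invoke Theorem 5.1.

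The crux is the same expansion for $\Delta_{i,i+1}(M)$, the minor on rows $1,\dots,i$ and columns $1,\dots,i-1,i+1$. As before, the first $i-1$ columns are completely rigid: they are forced to yield $\hat c_{n-1},\hat c_{n-2},\dots,\hat c_{n-i+1}$ with coefficient $\prod_{k=1}^{i-1} y_{n-1,n-k}$. But the remaining column, column $i+1$, now genuinely branches, because both of its options $\hat c_{n-i}$ (coefficient $1$) and $\hat c_{n-i-1}$ (coefficient $y_{n-1,n-i-1}$) avoid the already-used indices $n-1,\dots,n-i+1$. The first option gives the vectors $\hat c_{n-1},\dots,\hat c_{n-i}$, i.e. $\Delta_i(u^{(n-1)}w_0^{(n-1)})$; the second gives $\hat c_{n-1},\dots,\hat c_{n-i+1},\hat c_{n-i-1}$, which, since column $i+1$ of $u^{(n-1)}w_0^{(n-1)}$ is column $n-i-1$ of $u^{(n-1)}$, is exactly $\Delta_{i,i+1}(u^{(n-1)}w_0^{(n-1)})$ (in the correct left-to-right order, so no reordering sign). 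Every other choice repeats an index and vanishes, so
\[
\Delta_{i,i+1}(u^{(n)}w_0^{(n)}) = \Big(\prod_{k=1}^{i-1} y_{n-1,n-k}\Big)\Big(\Delta_i(u^{(n-1)}w_0^{(n-1)}) + y_{n-1,n-i-1}\,\Delta_{i,i+1}(u^{(n-1)}w_0^{(n-1)})\Big).
\]
Dividing by $\Delta_i(u^{(n)}w_0^{(n)}) = \big(\prod_{k=1}^{i} y_{n-1,n-k}\big)\,\Delta_i(u^{(n-1)}w_0^{(n-1)})$ and cancelling $\prod_{k=1}^{i-1} y_{n-1,n-k}$ yields the recursion in the statement at once; the convention $y_{n-1,0}=0$ kills the second branch when $i = n-1$, so no separate case is needed.

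The one point requiring real care is the combinatorial rigidity: verifying that the surviving terms of the multilinear expansion are exactly those described — the column indices fixed all the way through column $i-1$ and then splitting into precisely two admissible options at column $i+1$ — and that these two options reproduce $\Delta_i$ and $\Delta_{i,i+1}$ of the rank-$(n-1)$ matrix in the correct order, so that no permutation sign creeps in. The rest (the bidiagonal shape of $v\,w_0^{(n)}$, the coefficient bookkeeping, the final cancellation) is mechanical.
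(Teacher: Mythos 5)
Your proof is correct and amounts to essentially the same approach as the paper: both factor $u^{(n)}w_0^{(n)}$ across the last elementary layer $v=\fX_{n-1}^{n}$ and expand the two minors via Cauchy--Binet (equivalently, multilinearly in the columns), identifying exactly two surviving terms that reproduce $\Delta_i$ and $\Delta_{i,i+1}$ of the rank-$(n-1)$ matrix with coefficients $\prod_{k=1}^{i-1}y_{n-1,n-k}$ and $y_{n-1,n-i-1}\prod_{k=1}^{i-1}y_{n-1,n-k}$. The only cosmetic difference is the grouping: the paper applies Cauchy--Binet to $A=u^{(n-1)}w_0^{(n-1)}$ and $B=(w_0^{(n-1)})^{-1}v\,w_0^{(n)}$ and picks out $K=I$ and $K=I'$, whereas you track columns of $u^{(n-1)}$ directly through $u^{(n-1)}\cdot(v\,w_0^{(n)})$; the surviving index sets, the sign bookkeeping, and the final cancellation are the same.
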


To prove the above recursive relations we need a special case of the Cauchy-Binet formula, which is stated as Theorem 5.3 below.

\begin{te} (Cauchy-Binet formula)
Let $M$ and $N$be two $n\times n$ matrices (over arbitrary fields), and $I$, $J$ be two subsets of $\{1,\cdots,n\}$ with $i$ elements. Write $M_{IJ}$ for the $i\times i$ matrix whose rows are the rows of $M$ at indices from $I$ and columns are the columns of $M$ at indices from $J$, and $|M_{IJ}|$ for the determinant of this matrix. Therefore we have
\begin{align}
    |(MN)_{IJ}|=\sum_{K}|M_{IK}||N_{KJ}|,
\end{align}
where $K$ runs through all subsets of $\{1,\cdots,n\}$ with $i$ elements.
\end{te}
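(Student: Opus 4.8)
The statement to prove is the Cauchy--Binet formula, Theorem 5.3, so the plan is to give a self-contained proof of this classical identity by reduction to the multilinearity of the determinant.

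\medskip

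The plan is to prove the identity by expanding the $i\times i$ minor $|(MN)_{IJ}|$ using the row structure of the product $MN$. Write $I=\{p_1<\cdots<p_i\}$ and $J=\{q_1<\cdots<q_i\}$. The $(p_a,q_b)$ entry of $MN$ is $\sum_{k=1}^n M_{p_a,k}N_{k,q_b}$, so the $a$-th row of $(MN)_{IJ}$ is a linear combination $\sum_{k=1}^n M_{p_a,k}\,(\text{$k$-th row of }N_{\,\cdot\,J})$ of the $n$ rows of the $n\times i$ matrix $N_{\,\cdot\,J}$ whose columns are indexed by $J$. First I would invoke $i$-fold multilinearity of the determinant in its rows to write
\begin{align}
|(MN)_{IJ}|=\sum_{k_1=1}^n\cdots\sum_{k_i=1}^n M_{p_1,k_1}\cdots M_{p_i,k_i}\,\bigl|\,R_{k_1},\ldots,R_{k_i}\,\bigr|,
\end{align}
where $R_k$ denotes the $k$-th row of $N_{\,\cdot\,J}$ and $|R_{k_1},\ldots,R_{k_i}|$ is the determinant of the $i\times i$ matrix with those rows stacked in that order. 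Next I would discard the degenerate terms: whenever two indices $k_a$ coincide the determinant $|R_{k_1},\ldots,R_{k_i}|$ vanishes, so only tuples $(k_1,\ldots,k_i)$ of distinct values survive. Each such tuple corresponds to a size-$i$ subset $K=\{k_1,\ldots,k_i\}$ of $\{1,\ldots,n\}$ together with an ordering, i.e. a permutation; grouping the $i!$ orderings of a fixed $K=\{r_1<\cdots<r_i\}$ and using antisymmetry of the determinant under row permutations gives
\begin{align}
\sum_{\sigma\in S_i} M_{p_1,r_{\sigma(1)}}\cdots M_{p_i,r_{\sigma(i)}}\operatorname{sgn}(\sigma)\,\bigl|R_{r_1},\ldots,R_{r_i}\bigr|
= |M_{IK}|\cdot|N_{KJ}|,
\end{align}
since $\sum_{\sigma}\operatorname{sgn}(\sigma)\prod_a M_{p_a,r_{\sigma(a)}}$ is exactly the Leibniz expansion of $\det M_{IK}$ and $|R_{r_1},\ldots,R_{r_i}|=|N_{KJ}|$ by definition of $N_{KJ}$. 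Summing over all $K$ yields the claimed formula.

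\medskip

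There is essentially no serious obstacle here; the argument is entirely formal, resting only on multilinearity and alternation of the determinant, both of which hold over an arbitrary commutative ring, so in particular over any field. The one point that requires a little care in the write-up is the bookkeeping in passing from the sum over ordered tuples of distinct indices to the sum over unordered subsets $K$: one must correctly match the sign $\operatorname{sgn}(\sigma)$ produced by reordering the rows $R_{k_a}$ into increasing order with the sign appearing in the Leibniz formula for $\det M_{IK}$. I would present this step explicitly rather than wave it through. Since the paper only needs the special case $I=J=\{1,\ldots,i\}$ (principal $i\times i$ minors) in the applications, I would remark that the general statement above specializes immediately, but it costs nothing to prove the general version.
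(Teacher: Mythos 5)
Your proof is correct and is the standard multilinearity argument for Cauchy--Binet: expand each row of $(MN)_{IJ}$ as a linear combination of the rows of $N_{\cdot J}$, use $i$-fold multilinearity to pull out a sum over ordered index tuples, discard tuples with repeats by alternation, and regroup the surviving ordered tuples by their underlying subset $K$, at which point the sum over orderings with signs reassembles into the Leibniz expansion of $\det M_{IK}$. The sign bookkeeping you flag is handled exactly as you describe: sorting $R_{k_1},\ldots,R_{k_i}$ into increasing order introduces $\operatorname{sgn}(\sigma)$, and $\sum_{\sigma}\operatorname{sgn}(\sigma)\prod_a M_{p_a,r_{\sigma(a)}}$ is precisely $\det M_{IK}$ once the rows of $M_{IK}$ are taken in increasing order of $I$ and the columns in increasing order of $K$. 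The argument uses only multilinearity and alternation, so it is valid over any commutative ring, covering the paper's use over $\IF_q$.

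For comparison: the paper does not actually prove Theorem 5.3 --- it cites Chapter 10 of Shafarevich--Remizov for the proof and moves straight to applying the special case $I=J=\{1,\ldots,i\}$ in Theorems 5.1 and 5.2. So there is no in-paper argument to compare routes against; your proposal supplies a self-contained proof where the paper only gives a reference. Your closing remark that the paper only ever invokes the principal-minor case but that the general statement costs nothing extra matches how the paper states and cites the result.
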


A proof of Theorem 5.3 can be found in Chapter 10 of \cite{sh}. Using formula (5.5) we are able to prove both Theorem 5.1 and Theorem 5.2.

\begin{proof}(Theorem 5.1)
Let $\fX_{n-1}^{n}=I_n +\sum_{i=1}^{n-1}y_{n-1,i}E_{i,i+1}$, therefore we have 
$$
u^{(n)}w_0^{(n)}=u^{(n-1)}w_0^{(n-1)}(w_0^{(n-1)})^{-1}\fX_{n-1}^{n}w_0^{(n)}.
$$
Let $A=u^{(n-1)}w_0^{(n-1)}$ and $B=(w_0^{(n-1)})^{-1}\fX_{n-1}^{n}w_0^{(n)}$, we see $A$ has the following shape:
$$
A=\left(\begin{matrix}
     *&\cdots &* &1 &0 \\
    \vdots &\iddots &1 &0 &0\\
   *& \iddots &\iddots  &\vdots &\vdots\\
1   &0 &\cdots & 0 &0 \\
0 &0 & \cdots & 0& 1
\end{matrix}\right).
$$
Furthermore,
$$
B=\begin{pmatrix}
y_{n-1,n-1} & 1 &0 &\cdots &0 \\
  0&  y_{n-1,n-2} &1 &\ddots &\vdots \\
 \vdots&\ddots  & \ddots & \ddots &0\\
  0& \cdots  &0 &y_{n-1,1} & 1\\
  1&0 &\cdots &0 &0 
\end{pmatrix},
$$
i.e, $y_{n-1,n-i}$ on the $i$-th place of the diagonal, 1 on the super-diagonal and the first entry of the $n$-th row, and 0 elsewhere. Now we use Theorem 5.3 to calculate $\Delta_i(AB)$,.

\

Let $I=\{1,\cdots,i\}$, we have $\Delta_i(AB)=|(AB)_{II}|$, therefore by Theorem 5.3 we have 
\begin{align}
    \Delta_i(AB)=\sum_{K}|A_{IK}||B_{KI}|,
\end{align}
where $K$ runs through all subsets of $\{1,\cdots,n\}$ with $i$ elements.
If $K=I$, then we have both $|A_{II}|$ and $|B_{II}|$ are the principal $i$-th minor of $A$ and $B$ respectively, therefore
$$
|A_{II}||B_{II}|=\prod_{k=1}^{i}y_{n-1,n-k}\Delta_i(u^{(n-1)}w_0^{(n-1)}).
$$
Suppose there exists $j\in K$ with $i<j<n$. Since the first $i$ entries in the $j$-th row of $B$ are zero, therefore we have $|B_{KI}|=0$ in this case. If $n\in K$, then the first $i$ entries in the last column of $A$ are zero, therefore $|A_{KI}|=0$ in this case. Hence we conclude that the only non-zero term in (5.6) is $|A_{II}||B_{II}|$. The proof of Theorem 5.1 is now complete.
\end{proof}

\

By a similar argument we could also prove Theorem 5.2.

\begin{proof} (Theorem 5.2)
Remark that by Theorem 5.1 the equation (5.4) is equivalent to the following
\begin{align}
    \Delta_{i,i+1}(u^{(n)}w_0^{(n)})&=\prod_{k=1}^{i-1}y_{n-1,n-k}\Delta_i(u^{(n-1)}w_0^{(n-1)}) \nonumber \\
    &+y_{n-1,n-i-1}\prod_{k=1}^{i-1}y_{n-1,n-k}\Delta_{i,i+1}(u^{(n-1)}w_0^{(n-1)}).
\end{align}
Let $I'=\{1,\cdots,i-1\}\cup \{i+1\}$, following the same notations adopted in Theorem 5.1 and Theorem 5.3, we have 
\begin{align}
    \Delta_{i,i+1}(AB)=\sum_{K}|A_{IK}||B_{KI'}|,
\end{align}
where $K$ runs through all subsets of $\{1,\cdots,n\}$ with $i$ elements.

If $K=I$, then clearly we have $|A_{IK}|=|A_{II}|=\Delta_i(u^{(n-1)}w_0^{(n-1)})$ and $|B_{KI'}|=|B_{II'}|=\prod_{k=1}^{i-1}y_{n-1,n-k}$ (the $(i,i)$ entry in $B_{II'}$ is 1).

Suppose we have $K=I'$, therefore $|A_{IK}|=|A_{II'}|=\Delta_{i,i+1}(u^{(n-1)}w_0^{(n-1)})$ and $|B_{KI'}|=|B_{I'I'}|=y_{n-1,n-i-1}\prod_{k=1}^{i-1}y_{n-1,n-k}$.

Suppose there exists $j\in K$ with $i+1<j\leq n$, then by exactly the same argument described in the proof of Theorem 6.1, we have $|A_{IK}||B_{KI'}|=0$ in this case.

One case for the choice of $K$ remaining is when both $i$ and $i+1$ are in $K$, in this case we see that the last two rows of $B_{KI'}$ are $(0,\cdots, 0, 1)$ and $(0,\cdots, 0, y_{n-1,n-i-1})$, therefore we obtain $|B_{KI'}|=0$.

Hence we conclude that only $|A_{II}||B_{II'}|$ and $|A_{II'}||B_{I'I'}|$ have contributions to the right hand side of (5.8), whence the proof of Theorem 5.3 is complete.

\end{proof}


\begin{thebibliography}{}


\bibitem{bfg}
Bump, D., Friedberg, S., Goldfeld, D. (1988). Poincaré series and Kloosterman sums for SL(3, $\IZ$). Acta Arithmetica, 50, 31-89.

\bibitem{c1}
Curtis, C. W. (1993). On the Gelfand-Graev representations of a reductive group over a finite field. Journal of Algebra, 157(2), 517-533.

\bibitem{c2}
Curtis, C. W. (2015). Notes on the structure constants of Hecke algebras of induced representations of finite Chevalley groups. Pacific J. Math, 279, 181-202.

\bibitem{cs}
Curtis, C. W.,  Shinoda, K. I. (2004). Zeta functions and functional equations associated with the components of the Gelfand-Graev representations of a finite reductive group. In Representation theory of algebraic groups and quantum groups (pp. 121-139). Mathematical Society of Japan.

\bibitem{dl}
Deligne, P.,  Lusztig, G. (1976). Representations of reductive groups over finite fields. Annals of Mathematics, 103(1), 103-161.

\bibitem{fr}
Friedberg, S. (1985). Explicit determination of GL(n) Kloosterman sums. Séminaire de Théorie des Nombres de Bordeaux, 1-22.





\bibitem{gel}
Gelfand, S. I. (1970). Representations of the full linear group over a finite field. Mathematics of the USSR-Sbornik, 12(1), 13.

\bibitem{glo}
Gerasimov, A. A., Lebedev, D. R., Oblezin, S. V. (2012). New integral representations of Whittaker functions for classical Lie groups. Russian Mathematical Surveys, 67(1), 1.

\bibitem{gb}
Goldfeld, D., Broughan, K. (2006). Automorphic forms and L-functions for the group GL(n,$\IR$) (Cambridge studies in advanced mathematics ; 99). Cambridge: Cambridge University Press.



\bibitem{kil}
Kilmoyer, R. W. (1978). Principal series representations of finite Chevalley groups. Journal of Algebra, 51(1), 300-319.




\bibitem{sh}
Shafarevich, I. R., Remizov, A. O. (2012). Linear algebra and geometry. Springer Science and Business Media.
Chicago	

\bibitem{St}
Steinberg, R., Faulkner, J.,Wilson, R. (1967). Lectures on Chevalley groups (pp. 1967-1968). New Haven: Yale University.

\end{thebibliography}
\end{document}